\documentclass[preprint,12pt]{elsarticle}

\usepackage[utf8]{inputenc}
\usepackage{lmodern}
\usepackage{amsmath,amssymb,amsthm,mathtools}
\usepackage{microtype}
\usepackage{enumitem}
\usepackage[colorlinks=true,linkcolor=blue,citecolor=blue,urlcolor=blue]{hyperref}
\usepackage[nameinlink,noabbrev]{cleveref}

\allowdisplaybreaks

\newtheorem{theorem}{Theorem}[section]
\newtheorem{proposition}[theorem]{Proposition}
\newtheorem{lemma}[theorem]{Lemma}
\newtheorem{corollary}[theorem]{Corollary}

\newtheorem{definition}[theorem]{Definition}

\newcommand{\R}{\mathbb{R}}
\newcommand{\N}{\mathbb{N}}
\newcommand{\E}{\mathbb{E}}
\newcommand{\Prob}{\mathbb{P}}
\newcommand{\cF}{\mathcal{F}}
\newcommand{\cH}{\mathcal{H}}
\newcommand{\Id}{\mathrm{Id}}
\newcommand{\Per}{\operatorname{Per}}
\newcommand{\tr}{\operatorname{tr}}
\newcommand{\op}{\mathrm{op}}
\newcommand{\1}{\mathbf{1}}
\newcommand{\logp}{\log_{+}}
\newcommand{\norm}[1]{\left\lVert #1\right\rVert}
\newcommand{\abs}[1]{\left\lvert #1\right\rvert}
\newcommand{\ip}[2]{\left\langle #1,#2\right\rangle}
\newcommand{\set}[1]{\left\{#1\right\}}
\newcommand{\ceil}[1]{\left\lceil #1\right\rceil}
\newcommand{\floor}[1]{\left\lfloor #1\right\rfloor}

\journal{Advances in Mathematics}

\begin{document}

\begin{frontmatter}

\title{An Exponential Lower Bound for the Permanent\\of Random Bernoulli Matrix}

\author[aff1]{Yiming Chen\corref{cor1}}
\ead{ymchenmath@math.pku.edu.cn}
\cortext[cor1]{Corresponding author}
\affiliation[aff1]{organization={School of Mathematical Sciences, Peking University},
                  city={Beijing},
                  country={China}}

\begin{abstract}

Let $M_n$ be an $n\times n$ matrix with independent uniform sign entries. We prove that there exist absolute constants $C,c>0$ such that, for all sufficiently large $n$,

$$
\Prob\!\left(\abs{\Per(M_n)}\ge e^{-Cn}\sqrt{n!}\right)\ge 1-n^{-c}.
$$

This confirms, up to the exponential scale, the lower bound suggested by Tao and Vu \cite{TaoVu2009}.


\end{abstract}

\begin{keyword}
Random permanents, random sign matrices, 
Rademacher quadratic forms, row exposure.

\medskip
\noindent\textbf{2020 Mathematics Subject Classification.}
60B20, 15A15, 60C05.
\end{keyword}

\end{frontmatter}

\section{Introduction and main results}

The asymptotic behavior of permanents of large random matrices has been
studied for several decades; see, for example,
\cite{BorovskikhKorolyuk1994,Rempala1996,RempalaWesolowski1999}.
Classical work established limit theorems and approximation results for
a variety of random permanent models. 

For the permanent for centered Bernoulli random matrix, let $M_n=(\xi_{ij})_{1\leq i,j\leq n},$ where the entries $\xi_{ij}$ are independent Rademacher random variables, i.e.,

\[
 \Prob(\xi_{ij}=1)=\Prob(\xi_{ij}=-1)=\frac12.
\]

The permanent of $M_n$ is defined by
\[
\operatorname{Per}(M_n)
=
\sum_{\sigma\in S_n}
\prod_{i=1}^n \xi_{i,\sigma(i)}.
\]

A major advance was obtained by Tao and Vu~\cite{TaoVu2009}, who
determined the leading asymptotic order of the permanent.

\begin{theorem}[\cite{TaoVu2009}]\label{thm:permanent}
There is a positive constant \( c \) such that for every \( \varepsilon > 0 \) and \( n \) sufficiently large depending on \( \varepsilon \), then  
\[
\mathbb{P}\left(|\operatorname{Per}(M_n)| \geq n^{(\frac{1}{2} - \varepsilon)n}\right) \geq 1 - n^{-c}.
\]
\end{theorem}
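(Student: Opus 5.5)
We follow the row-exposure strategy of Tao and Vu. Let $M_k$ denote the $k\times n$ matrix formed by the first $k$ rows. For a $k$-subset $S\subseteq[n]$ of columns, let $\Per(M_{k,S})$ be the permanent of the $k\times k$ submatrix on rows $1,\dots,k$ and columns $S$. Expanding along the last row gives the martingale-type recursion
\[
\Per(M_{k+1,T})=\sum_{j\in T}\xi_{k+1,j}\,\Per(M_{k,T\setminus\{j\}}),\qquad |T|=k+1 .
\]
Conditionally on the first $k$ rows, each new permanent is therefore a Rademacher linear form in the new row. Its coefficient vector is $v_T=(\Per(M_{k,T\setminus\{j\}}))_{j\in T}$, and its second moment is $\norm{v_T}_2^2$.

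The plan is to track, for each $k$, the quantity $Q_k=\sum_{|S|=k}\Per(M_{k,S})^2$, which satisfies $\E[Q_{k+1}\mid M_k]=(n-k)Q_k$. We want to show that, with high probability, $Q_k$ and the individual $\Per(M_{k,S})^2$ do not fall much below their typical size. Concretely, at each step most $(k+1)$-sets $T$ should have $\Per(M_{k+1,T})^2\ge n^{-O(\varepsilon)}\norm{v_T}_2^2$. This anti-concentration of a single Rademacher linear form would follow from Littlewood--Offord or Erdős-type small-ball estimates, once we know that $v_T$ has many coordinates of comparable size. The latter "spread" property can be propagated inductively. If most $S$ have large permanent, then each $v_T$ has many large entries.

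Given this, the steps are as follows. First, for $k\le n-n^{\delta}$, small-ball bounds for $\sum_j \xi_j a_j$ with at least $n-k$ non-negligible coefficients give failure probability $O((n-k)^{-1/2})$ per set. A Markov/averaging argument then shows that, outside an event of probability $O((n-k)^{-c})$, a $1-o(1)$ fraction of sets keep a permanent at least $n^{-\varepsilon}$ times typical. Multiplying the per-step losses of $n^{-\varepsilon}$ (or constant factors) over $n$ steps yields $|\Per|\ge \sqrt{n!}\,n^{-\varepsilon n}\ge n^{(1/2-\varepsilon)n}$. Second, the last $n^{\delta}$ rows, where few coefficients are left, need separate treatment. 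We would use the full small-ball structure: either an inverse Littlewood--Offord theorem, or a direct bound $\Prob(\sum \xi_j a_j=0)\le 1/2$ together with the observation that only the final permanent matters. We would pass through many disjoint choices of the last rows' column sets to boost the success probability.

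The main obstacle is controlling the union/averaging across steps. Failure probabilities summed over $n$ rows, $\sum_k (n-k)^{-1/2}$, are not small, so we cannot simply demand success at every step. The fix is to allow a loss factor $n^{-\varepsilon}$ instead of a constant at each step. Anti-concentration at scale $n^{-\varepsilon}\norm{v}$ then fails only with probability roughly $n^{-\varepsilon}+(n-k)^{-1/2}$, and most failures are absorbed by averaging over $T$. The genuinely dangerous steps are the last polynomially many rows, whose failure probabilities sum to $n^{-c}$. This gives the stated bound $1-n^{-c}$.
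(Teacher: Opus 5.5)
There is a genuine gap in the bulk phase. You never control how per-step failures accumulate over the $\Theta(n)$ exposed rows, and the mechanism you propose (Markov plus averaging over $T$) cannot do it.

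For a single set $T$, the linear form $\Per(M_{k+1,T})=\sum_{j\in T}\xi_{k+1,j}\Per(M_{k,T\setminus\{j\}})$ has $\abs{T}=k+1$ coefficients, not $n-k$. So at scale $n^{-\varepsilon}$ its failure probability is of order $n^{-\varepsilon}+k^{-1/2}$. Markov then bounds the probability that more than an $\eta$-fraction of the sets $T$ fail only by $O((n^{-\varepsilon}+k^{-1/2})/\eta)$, which does not decay with $n-k$. Summed over the $\sim n$ steps this is of order $n^{1-\varepsilon}/\eta\to\infty$. Your per-step bound $O((n-k)^{-c})$ with a summable tail is therefore unsupported, and so is the claim that only the last rows are dangerous.

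Averaging over $T$ gives only an expectation bound on the failing fraction, not concentration. Flipping one entry $\xi_{k+1,j}$ changes every $\Per(M_{k+1,T})$ with $T\ni j$, which is a $(k+1)/n$ fraction of all sets, so bounded differences gives nothing. Nothing in your argument prevents the failures from being completely correlated: if $\xi\mapsto(\Per(M_{k+1,T}))_T$ is nearly rank one, all $T$ fail together with probability $\asymp n^{-\varepsilon}$.

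Two further points are unresolved:
\begin{enumerate}
\item Erd\H{o}s--Littlewood--Offord anti-concentrates at the scale of individual coefficients. Reaching the scale $n^{-\varepsilon}\norm{v_T}_2$ needs a covering argument plus an \emph{upper} spread condition on $v_T$ (no few dominant entries), which ``many large entries'' does not give.
\item Propagating ``most parents heavy'' by double counting and Markov only yields $\eta_{k+1}\le 2\eta_k+(\text{fresh failures})$ for the light fraction $\eta_k$, which can blow up.
\end{enumerate}

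The missing idea is to stop requiring each step to succeed and to control the \emph{sum} of logarithmic losses instead. The paper does not reprove this statement directly; it is quoted from Tao--Vu. It does follow from the stronger Theorem~\ref{thm:main}, whose proof works this way. Your $Q_k$ is the paper's $S_k$.
\begin{enumerate}
\item Write $S_{k+1}=x^{\mathsf T}Q_kx$, with $x$ the new row. Then $\tr Q_k=(n-k)S_k$ and the diagonal is automatically flat, $\max_i(Q_k)_{ii}\le S_k$ (Lemma~\ref{lem:trace-diagonal}). No spread property has to be propagated.
\item Theorem~\ref{thm:flat-diagonal} (Berry--Esseen on the top eigenspace, Hanson--Wright on the rest) bounds the conditional mean of the truncated loss $\min\{\ell_k,\log(6n)\}$ by an absolute constant for $\floor{n/2}\le k<n-\ceil{n/\log n}$.
\item Lemma~\ref{lem:one-step-floor} gives the floor $S_{k+1}\ge\frac{n-k}{6(k+1)}S_k$ with summable failure probability $2e^{-(n-k)/18}$, so the truncation is inactive.
\item Azuma's inequality for the bounded-increment martingale of truncated losses makes the total loss $O(n)$ with probability $1-e^{-cn/(\log n)^2}$. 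The first half is handled deterministically by Proposition~\ref{prop:middle-energy}.
\end{enumerate}
Individual steps may fail badly; only the sum matters, and the $n^{-c}$ comes entirely from the endgame. There, the disjoint-complement boosting you allude to also needs real work, because one new row drives all candidate minors at once. That is the two-case argument of Lemma~\ref{lem:codimension-reduction}: Erd\H{o}s--Littlewood--Offord for sets with many heavy parents, and McDiarmid with degree control otherwise. Even for the weaker loss $n^{-\varepsilon n}$, a martingale device of this kind, with a floor bounding the increments, has to replace your per-step union bound.
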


They further pointed out that the bound $n^{(\frac{1}{2}-\varepsilon)n}$ should be sharpened to $n^{n/2}\exp(-\Theta(n))$, see Remark~1.4 of \cite{TaoVu2009}. Subsequent work on discrete random matrices has addressed several related aspects of the permanent. Kwan and Sauermann \cite{Kwan2022} extended the \(n^{n/2+o(n)}\) scale to random symmetric Bernoulli matrices, Ingram and Razborov \cite{IngramRazborov2026} studied the range of the permanent, and Hunter, Kwan, and Sauermann \cite{HunterKwanSauermann2025} established an exponential single-point anti-concentration bound  
\[
    \sup_{z\in\mathbb R}
    \mathbb P\!\left(\operatorname{Per}(M_n)=z\right)
    \le e^{-cn}.
\]

The exponential scale lower bound suggested by Tao and Vu for the original sign model, however, remained open. Our main result confirms this result.

The exponential scale lower bound suggested by Tao and Vu for the
original sign model, however, remained open. This improvement is more delicate than determining the $n^{n/2+o(n)}$ scale because of the fact that
losses that are negligible at the level of the leading power of $n$
may still be much larger than $e^{O(n)}$. The key difficulty is
therefore to control the cumulative logarithmic loss over a linear
number of row exposure steps. Our main result confirms this prediction.


\begin{theorem}\label{thm:main}
There exist absolute constants $C,c>0$ and $n_0\in\N$ such that, for every $n\geq n_0$,
\begin{equation}\label{eq:main-lower}
 \Prob\!\left(
   \abs{\Per(M_n)}\geq e^{-Cn}\sqrt{n!}
 \right)
 \geq 1-n^{-c}.
\end{equation}
\end{theorem}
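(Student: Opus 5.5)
We follow the row exposure strategy of Tao and Vu, but keep every logarithmic loss of order $O(1)$ per step instead of $n^{o(1)}$. For $1\le k\le n$ let $M_k$ be the top $k\times n$ submatrix. For each $k$-subset $S\subseteq[n]$ of columns let $\Per_S$ denote the permanent of the $k\times k$ submatrix on rows $[k]$ and columns $S$. Define
\[
Q_k=\sum_{|S|=k}\Per_S^2 .
\]
Then $\E Q_k=\binom nk k!=n!/(n-k)!$, and at $k=n$ we have $Q_n=\Per(M_n)^2$. Expanding along the new row gives
\[
\Per_{S}=\sum_{j\in S}\xi_{k+1,j}\Per_{S\setminus\{j\}}
\]
for $|S|=k+1$. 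Conditional on $M_k$, each $\Per_S$ with $|S|=k+1$ is therefore a Rademacher linear form in row $k+1$, and $Q_{k+1}$ is a Rademacher quadratic form. Its conditional mean is
\[
\E[Q_{k+1}\mid M_k]=\sum_{|T|=k}\Per_T^2\,(n-k)=(n-k)Q_k .
\]
So the natural target is $Q_{k+1}\ge c\,(n-k)Q_k$ for most steps. Multiplying these gives $Q_n\ge c^{n}\,n!$, which is the claimed bound $|\Per(M_n)|\ge e^{-Cn}\sqrt{n!}$.

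The key steps, in order, are as follows.

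First, a Paley--Zygmund step. Conditional on $M_k$, write $Q_{k+1}=\sum_{j,l}\xi_j\xi_l A_{jl}$ with $A_{jl}=\sum_{S\ni j,l}\Per_{S\setminus j}\Per_{S\setminus l}$. The second moment $\E[Q_{k+1}^2\mid M_k]$ is bounded by $(\E Q_{k+1})^2+2\|A_{\mathrm{off}}\|_F^2$. We then need $\|A_{\mathrm{off}}\|_F^2\lesssim (n-k)^2Q_k^2$ outside a bad event. Given this, $\Prob(Q_{k+1}\ge \tfrac12(n-k)Q_k\mid M_k)\ge p_0>0$ for an absolute $p_0$.

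Second, since a constant success probability per step is not enough, we pass to the logarithm. Set
\[
X_k=\log\frac{Q_{k+1}}{(n-k)Q_k}.
\]
Jensen gives $\E[e^{X_k}\mid M_k]=1$, so the upward drift is controlled. We need a lower-tail estimate
\[
\Prob(X_k\le -t\mid M_k)\le Ce^{-ct}+\text{(bad event)},
\]
uniformly in $k\le n-m$. This comes from small-ball (anti-concentration) bounds for Rademacher quadratic forms. Because $Q_{k+1}$ is a sum of squares of linear forms, it suffices to control the small-ball probability of a few dominant linear forms $\Per_S$. For these we use Littlewood--Offord/Erdős bounds, combined with the fact that the coefficient vector $(\Per_{S\setminus j})_j$ has many nonzero entries of comparable size. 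With such tails, $\sum_k X_k$ is a sum of conditionally sub-exponential lower tails. An Azuma/Freedman-type martingale bound then gives $\sum_{k<n-m} X_k\ge -Cn$ with probability $1-e^{-cn}$.

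Third, the last $m=n^{c'}$ rows, and the bad events. For the last rows, where $n-k$ is small, concentration of the quadratic form fails. Here we argue directly as Tao--Vu do. Each step loses at most a factor polynomial in $n$ with probability $1-O((n-k)^{-c})$, so the total loss over these rows is $n^{O(m)}=e^{o(n)}$ after choosing $m$ small. This step is also where the failure probability $n^{-c}$ comes from.

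The main obstacle is the Frobenius bound on $A_{\mathrm{off}}$, that is, showing the off-diagonal correlations $\sum_{S}\Per_{S\setminus j}\Per_{S\setminus l}$ are not much larger than typical. We would try to prove a moment bound
\[
\E\|A_{\mathrm{off}}\|_F^2\lesssim (n-k)^2(\E Q_k)^2
\]
by expanding in permutations and using orthogonality of distinct monomials. We would then transfer it to a high-probability statement relative to $Q_k$ itself. For this we use a second martingale tracking the ratio $\|A_{\mathrm{off}}\|_F^2/((n-k)Q_k)^2$, and show that it rarely drifts upward.
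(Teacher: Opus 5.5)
There is a genuine gap, and it sits in your second step rather than where you place the main obstacle. The Frobenius bound is automatic: $A=L^{\mathsf T}L$ is positive semidefinite, so $\norm{A_{\mathrm{off}}}_F\le\norm{A}_F\le\tr A=(n-k)Q_k$ for every realization. Paley--Zygmund therefore needs no moment computation and no second martingale. But, as you note yourself, a constant success probability per step is useless, so everything rests on the lower tail of $X_k$, and the mechanism you propose does not deliver it. There are three problems.
\begin{enumerate}[label=(\alph*)]
\item Lower-bounding $Q_{k+1}$ by a few squares $\Per_S^2$ recovers only about a $\binom{n}{k+1}^{-1}$ fraction of $(n-k)Q_k$ when the energy is spread over all the minors. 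That is a huge loss per step.
\item Nothing tells you that $(\Per_{S\setminus j})_{j\in S}$ has many entries of comparable size.
\item Even granting (b), the Erd\H{o}s--Littlewood--Offord bound is a single-scale estimate $O(K^{-1/2})$ that does not improve when the scale shrinks by $e^{-t}$, since a sign sum with equal coefficients has an atom at $0$ of that order. So it cannot give $\Prob(X_k\le -t\mid M_k)\le Ce^{-ct}$.
\end{enumerate}
The missing idea is a small-ball bound for the PSD form as a whole that uses only deterministic structure, namely the flat diagonal $\max_j A_{jj}=\max_j\sum_{|T|=k,\,j\notin T}\Per_T^2\le Q_k=\tr A/(n-k)$. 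The paper projects the sign vector onto the at most $2D$ eigenvectors with eigenvalue at least $\tr A/(2D)$. Flatness forces every row of this projection to have norm at most $\sqrt{2D/(n-k)}$. A multivariate Berry--Esseen theorem plus a Gaussian $\mu^{1/4}$ small-ball bound then applies, and Hanson--Wright covers the case where these eigenvalues carry less than half the trace. The bound still has a non-decaying term $D^{7/4}(n-k)^{-1/2}+e^{-cD}$. This is neutralized by four choices together: truncating the loss at $\log(6n)$, taking $D\asymp\log\log n$, restricting to $n-k\ge n/\log n$, and a separate floor lemma. The floor lemma uses that one of $c\pm x$ has modulus at least $\abs{x}$, plus Hoeffding over the children of each parent, to show the untruncated loss exceeds $\log(6n)$ only with probability $O(e^{-(n-k)/18})$. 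This gives a constant conditional expected loss per step, and Azuma sums it.

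Your third step also fails as stated. Per-step failure probabilities $O((n-k)^{-c})$ summed over the last $m$ rows are not small; the last row alone contributes a constant. Indeed, knowing only that $Q_{n-1}$, or one cofactor, is large does not suffice: if the energy sits on two cofactors of equal modulus, then $\Per(M_n)=0$ with probability $1/2$. The Tao--Vu endgame, which the paper reproduces, needs a stronger inductive hypothesis.
\begin{itemize}
\item Start from one heavy minor at codimension $r_*\approx n/\log n$. A greedy exposure steered into prescribed disjoint column blocks produces order $n/(\log n)^2$ equally heavy minors at codimension $L=\floor{\log n/100}$, with pairwise disjoint complements.
\item Each subsequent codimension reduction costs a factor $n$ in the threshold and a factor $10$ in the count. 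It fails with probability only $n^{-c}$, using Littlewood--Offord when many parents are heavy and bounded differences otherwise.
\item The final row then sees at least $n^{0.9}$ large cofactors.
\end{itemize}
The total failure probability is therefore $n^{-c}$, and the total loss is $n^{-L}=e^{o(n)}$.
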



Together with a second-moment upper bound, Theorem~\ref{thm:main} yields the following
asymptotic estimate.

\begin{corollary}\label{cor:two-sided}
There are absolute constants $0<c_1<C_1<\infty$ such that
\[
 \Prob\!\left(
 n^{n/2}e^{-C_1n}
 \leq \abs{\Per(M_n)}
 \leq n^{n/2}e^{-c_1n}
 \right)=1-o(1).
\]
\end{corollary}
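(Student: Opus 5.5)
The corollary combines the lower bound of Theorem~\ref{thm:main} with a second-moment upper bound, after converting $\sqrt{n!}$ to $n^{n/2}$ via Stirling's formula.

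\emph{Lower bound.} Stirling's formula gives $n! = n^n e^{-n}\sqrt{2\pi n}\,(1+o(1))$, so
\[
\sqrt{n!} \ge n^{n/2}e^{-n/2}
\]
for all large $n$. Hence on the event in \eqref{eq:main-lower} we have
\[
\abs{\Per(M_n)} \ge n^{n/2} e^{-(C+1/2)n}.
\]
By Theorem~\ref{thm:main} this event has probability at least $1-n^{-c}=1-o(1)$. We may therefore take $C_1 = C+1$, which is comfortably larger than $C+1/2$.

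\emph{Upper bound.} Expand the second moment as
\[
\E\,\Per(M_n)^2=\sum_{\sigma,\tau\in S_n}\E\prod_i \xi_{i\sigma(i)}\xi_{i\tau(i)}.
\]
For fixed $i$ and $\sigma \neq \tau$, whenever $\sigma(i)\ne\tau(i)$ the variables $\xi_{i\sigma(i)}$ and $\xi_{i\tau(i)}$ are distinct. Distinct rows $i$ involve disjoint entries, so all the products are independent and centered. Consequently, if $\sigma\ne\tau$ there is some $i$ with $\sigma(i)\neq\tau(i)$, and the term vanishes. If $\sigma=\tau$, every factor squares to $1$. Therefore
\[
\E\,\Per(M_n)^2 = n!.
\]

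Markov's inequality then gives
\[
\Prob\bigl(\abs{\Per(M_n)}\ge \sqrt{n!}\,e^{n/8}\bigr)\le e^{-n/4}.
\]
Stirling again gives $\sqrt{n!}\le n^{n/2}e^{-n/2}(2\pi n)^{1/4}(1+o(1))\le n^{n/2}e^{-0.49n}$ for large $n$. Hence, with probability $1-o(1)$,
\[
\abs{\Per(M_n)}\le n^{n/2}e^{-0.49n+n/8}\le n^{n/2}e^{-n/4}.
\]
We may therefore take $c_1=1/4$.

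\emph{Combining.} A union bound over the two failure events gives the two-sided estimate with probability $1-o(1)$, with constants satisfying $0<c_1=1/4<C_1$. The inequality $C_1>c_1$ holds automatically, since $C_1 = C+1 \ge 1$.

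There is no real obstacle: all the difficulty lies in Theorem~\ref{thm:main}, and the remaining care is only in tracking the $e^{\pm n/2}$ Stirling factor so that $c_1>0$ is achieved.
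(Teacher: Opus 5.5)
Your proof is correct and follows the paper's argument essentially step for step: the lower bound comes from Theorem~\ref{thm:main} plus Stirling, and the upper bound from $\E\,\Per(M_n)^2=n!$, Markov's inequality and Stirling; your threshold $e^{n/8}\sqrt{n!}$ in place of the paper's $e^{n/4}\sqrt{n!}$ only changes the explicit constants. One small wording fix in the second-moment step: only the row factors with $\sigma(i)\neq\tau(i)$ are centered (those with $\sigma(i)=\tau(i)$ equal $1$), and it is independence across rows together with one centered factor that makes the term vanish.
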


\begin{proof}
The lower bound is Theorem \ref{thm:main} and Stirling's formula.
For the upper bound, expand the square of the permanent:
\[
 \E\abs{\Per(M_n)}^2
 =\sum_{\sigma,\tau\in S_n}
   \E\prod_{i=1}^n \xi_{i,\sigma(i)}\xi_{i,\tau(i)}.
\]
If $\sigma\neq\tau$, there is an $i$ such that $\sigma(i)\neq\tau(i)$, and the expectation vanishes because the two signs in row $i$ are independent and centered.
If $\sigma=\tau$, the expectation equals $1$.
Hence
\begin{equation}\label{eq:second-moment}
 \E\abs{\Per(M_n)}^2=n!.
\end{equation}
By Markov's inequality,
\[
 \Prob\!\left(
 \abs{\Per(M_n)}>e^{n/4}\sqrt{n!}
 \right)\leq e^{-n/2}.
\]
Stirling's formula gives
\[
 e^{n/4}\sqrt{n!}
 =n^{n/2}\exp\!\left(-\frac n4+O(\log n)\right),
\]
which is at most $n^{n/2}e^{-c_1n}$ for a suitable absolute $c_1>0$ and all sufficiently large $n$.
\end{proof}

\subsection*{Proof architecture}
For $0\leq k\leq n$ and $A\in\binom{[n]}{k}$, let $X_k(A)$
denote the permanent of the submatrix formed by the first $k$ rows
and the columns indexed by $A$, with the convention
Define the total squared minor energy
\begin{equation}\label{eq:def-Sk}
 S_k:=\sum_{A\in\binom{[n]}{k}}X_k(A)^2.
\end{equation}
The proof has three parts.
First, for $m=\floor{n/2}$, a Boolean lattice operator inequality show that $S_m\geq m!$.
Second, for $m\leq k\leq n-n/\log n$, write $S_{k+1}=\varepsilon^{\mathsf T}Q_k\varepsilon$ for a positive semidefinite matrix satisfying
\[
 \tr Q_k=(n-k)S_k,
\]
and
\[
 \max_i(Q_k)_{ii}\leq S_k
 =\frac{\tr Q_k}{n-k}.
\]
The resulting flatness of the diagonal yields a constant expected truncated logarithmic loss from $S_k$ to $S_{k+1}$.
A separate lower-tail floor excludes losses deeper than $O(\log n)$, then wecan bound the total loss by $O(n)$, which produces one large minor of order $n-n/\log n$.
Third, an endgame argument, adapted from Section 6 of~\cite{TaoVu2009} and proved here in full, converts that minor into a large full permanent with only $e^{o(n)}$ additional loss.

\section{Probabilistic preliminaries}\label{sec:prelim}

We introduce some basic results in this section.
Unless stated otherwise, $C,c$ and their indexed variants denote positive absolute constants whose values may change from line to line.

\subsection{Littlewood--Offord anti-concentration}

We first use the classical theorem of Erd\H{o}s~\cite{Erdos1945} in the following interval form.

\begin{theorem}[Erd\H{o}s--Littlewood--Offord]\label{thm:ELO}
Let $a_1,\dots,a_N\in\R$, let $\varepsilon_1,\dots,\varepsilon_N$ be independent uniform signs, and suppose that $\abs{a_i}\geq \rho$ for at least $K$ indices $i$, where $\rho>0$.
Then, for every $x\in\R$,
\begin{equation}\label{eq:ELO}
 \Prob\!\left(
   \abs{\sum_{i=1}^N\varepsilon_i a_i-x}\leq \rho
 \right)
 \leq \frac{C}{\sqrt K}.
\end{equation}
\end{theorem}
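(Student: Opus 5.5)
We reduce to the classical Erd\H{o}s form of the Littlewood--Offord lemma by symmetry and then cover the interval by boundedly many scaled antichain events. Let $I=\set{i:\abs{a_i}\geq\rho}$, so $\abs{I}\geq K$. We condition on $(\varepsilon_i)_{i\notin I}$; this only shifts $x$, so it suffices to treat the case where all $N=K'\geq K$ coefficients satisfy $\abs{a_i}\geq\rho$. Since $\varepsilon_i$ and $-\varepsilon_i$ have the same law, we may also replace $a_i$ by $\abs{a_i}$ and assume $a_i\geq\rho>0$ for all $i$.

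The core step is Erd\H{o}s's Sperner argument. Identify each sign vector with the set $T=\set{i:\varepsilon_i=+1}\subseteq[K']$, so that $\sum_i\varepsilon_ia_i=2\sum_{i\in T}a_i-\sum_i a_i$. Consider the family $\mathcal F$ of sets $T$ for which this sum lies in a half-open interval $J$ of length $2\rho$, say $[y,y+2\rho)$. If $T\subsetneq T'$ are both in $\mathcal F$, then the two sums differ by $2\sum_{i\in T'\setminus T}a_i\geq 2\rho$, which is impossible inside $J$. Hence $\mathcal F$ is an antichain, and Sperner's theorem gives
\[
\abs{\mathcal F}\leq\binom{K'}{\floor{K'/2}}\leq \frac{C\,2^{K'}}{\sqrt{K'}}.
\]
The last inequality is a routine Stirling estimate.

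It remains to handle the closed interval $[x-\rho,x+\rho]$, which has length $2\rho$ but is closed. We cover it by the two half-open intervals $[x-\rho,x+\rho)$ and $[x+\rho,x+3\rho)$, each of length $2\rho$. Applying the antichain bound to each and dividing by $2^{K'}$ gives probability at most $2C/\sqrt{K'}\leq 2C/\sqrt K$. Finally, averaging over the conditioned signs outside $I$ yields \eqref{eq:ELO}.

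No step is a genuine obstacle here. The only point needing care is the strictness in the antichain step: a half-open window of length $2\rho$ excludes two sums differing by exactly $2\rho$, and the endpoint issue is absorbed into the constant through the two-interval cover.
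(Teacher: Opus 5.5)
Your proof is correct. The conditioning reduction (which the paper also notes simply absorbs the conditioned terms into the shift $x$), the sign-flip symmetry, the antichain argument on a half-open window of length $2\rho$ with Sperner's theorem, and the cover of the closed interval by two half-open windows are all sound. The paper gives no proof of its own and only cites Erd\H{o}s's 1945 paper, whose argument is exactly this Sperner antichain proof, so your approach is essentially the one the paper relies on.
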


The same estimate applies conditionally: after conditioning on a subset of
the signs, the conditioned terms can be absorbed into the shift \(x\).


\subsection{Hoeffding, bounded differences, and martingale concentration}

We use the following standard consequences of Hoeffding's inequality~\cite{Hoeffding1963}, the bounded-differences inequality~\cite{McDiarmid1989}, and Azuma's martingale inequality~\cite{Azuma1967}.

\begin{theorem}\label{thm:hoeffding}
Let $Z_1,\dots,Z_r$ be independent $\{0,1\}$-valued random variables satisfying $\E Z_i\geq 1/2$.
Then
\begin{equation}\label{eq:hoeffding-third}
 \Prob\!\left(\sum_{i=1}^r Z_i<\frac r3\right)
 \leq e^{-r/18}.
\end{equation}
Also,
\begin{equation}\label{eq:none-success}
 \Prob(Z_1=\cdots=Z_r=0)\leq 2^{-r}.
\end{equation}
\end{theorem}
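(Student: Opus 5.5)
The statement just above is Theorem~\ref{thm:hoeffding}, and both parts follow from independence and the standard Hoeffding bound. I will prove the second inequality \eqref{eq:none-success} first, since it is immediate.

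By independence,
\[
\Prob(Z_1=\cdots=Z_r=0)=\prod_{i=1}^r \Prob(Z_i=0)=\prod_{i=1}^r\bigl(1-\E Z_i\bigr)\le 2^{-r},
\]
because $\E Z_i\ge 1/2$ for every $i$.

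For \eqref{eq:hoeffding-third}, set $S=\sum_{i=1}^r Z_i$. The hypothesis gives $\E S\ge r/2$, so the event $\{S<r/3\}$ is contained in $\{S-\E S< -r/6\}$. Each $Z_i$ takes values in $[0,1]$, so Hoeffding's inequality \cite{Hoeffding1963} applies and yields, for every $t>0$,
\[
\Prob(S-\E S\le -t)\le \exp\!\left(-\frac{2t^2}{r}\right).
\]
Taking $t=r/6$ gives the bound $\exp(-2r^2/(36 r))=e^{-r/18}$, which is exactly \eqref{eq:hoeffding-third}. The only check needed is this arithmetic: the deviation is $r/2-r/3=r/6$, and $2(r/6)^2/r=r/18$.

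If one prefers not to cite Hoeffding directly, the same bound comes from the exponential moment method. Apply Markov's inequality to $e^{-\lambda S}$ and bound each factor by Hoeffding's lemma, $\E e^{-\lambda(Z_i-\E Z_i)}\le e^{\lambda^2/8}$. Optimising at $\lambda=4t/r$ recovers $\exp(-2t^2/r)$.

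None of these steps is a real obstacle. The only point needing care is that the mean is merely bounded below by $r/2$, not equal to it. This is harmless, because increasing $\E S$ only makes the event $\{S<r/3\}$ a larger deviation below the mean, so the inclusion into $\{S-\E S<-r/6\}$ still holds.
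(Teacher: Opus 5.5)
Your proposal is correct and matches the paper, which states this result without proof as a standard consequence of Hoeffding's inequality. Your two steps supply exactly that justification: the product bound for the all-zero event, and Hoeffding with deviation $t=r/6$ below a mean that is at least $r/2$.
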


\begin{theorem}\label{thm:mcdiarmid}
Let $Y=f(\eta_1,\dots,\eta_N)$, where $\eta_1,\dots,\eta_N$ are independent.
Suppose that changing only the $i$th coordinate can change $f$ by at most $c_i$.
Then, for every $t\geq0$,
\begin{equation}\label{eq:mcdiarmid}
 \Prob(Y\leq \E Y-t)
 \leq \exp\!\left(-\frac{2t^2}{\sum_{i=1}^N c_i^2}\right).
\end{equation}
\end{theorem}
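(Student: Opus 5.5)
We prove the lower-tail bound \eqref{eq:mcdiarmid} by the Doob martingale method combined with Hoeffding's lemma. Write $\cF_i=\sigma(\eta_1,\dots,\eta_i)$ and $Y_i=\E[Y\mid \cF_i]$, so that $Y_0=\E Y$ and $Y_N=Y$. We tacitly assume $f$ is measurable. Then $Y$ is integrable: fixing any reference point $x^{(0)}$ and changing the coordinates one at a time, the bounded-differences hypothesis gives $\abs{f(x)-f(x^{(0)})}\le \sum_i c_i$. Set $D_i=Y_i-Y_{i-1}$, so that
\[
Y-\E Y=\sum_{i=1}^N D_i \quad\text{and}\quad \E[D_i\mid\cF_{i-1}]=0 .
\]

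The key step is to show that, conditionally on $\cF_{i-1}$, each $D_i$ lies in an interval of length at most $c_i$. The bound $\abs{D_i}\le c_i$ alone would only give the weaker exponent $t^2/(2\sum c_i^2)$, so the interval structure is essential. Independence lets us write $Y_i=g_i(\eta_1,\dots,\eta_i)$ with
\[
g_i(x_1,\dots,x_i)=\E f(x_1,\dots,x_i,\eta_{i+1},\dots,\eta_N),
\]
the expectation taken over the remaining independent coordinates. Define
\[
L_i=\inf_{x}g_i(\eta_1,\dots,\eta_{i-1},x)-Y_{i-1},\qquad U_i=\sup_{x}g_i(\eta_1,\dots,\eta_{i-1},x)-Y_{i-1}.
\]
Both are $\cF_{i-1}$-measurable, and $L_i\le D_i\le U_i$. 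For any two values $x,x'$ we have
\[
g_i(\dots,x)-g_i(\dots,x')=\E\bigl[f(\dots,x,\dots)-f(\dots,x',\dots)\bigr]\le c_i,
\]
since the integrand is pointwise at most $c_i$. Hence $U_i-L_i\le c_i$. I expect this to be the only point needing care; the remaining measurability issues (the infimum and supremum over $x$) can be avoided by working with essential bounds, or by noting that the inequality $U_i-L_i\le c_i$ is all that is used.

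Next, Hoeffding's lemma, applied conditionally to the mean-zero variable $-D_i$ supported in an interval of length $c_i$, gives for every $\lambda>0$
\[
\E\bigl[e^{-\lambda D_i}\mid\cF_{i-1}\bigr]\le e^{\lambda^2c_i^2/8}.
\]
This follows from convexity of $e^{-\lambda x}$ on $[L_i,U_i]$ and the standard bound on $\log$ of the resulting two-point mixture. Peeling off the conditional expectations from $i=N$ down to $i=1$ yields
\[
\E e^{-\lambda(Y-\E Y)}\le \exp\Bigl(\frac{\lambda^2}{8}\sum_i c_i^2\Bigr).
\]

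Finally, the Chernoff bound gives
\[
\Prob(Y\le \E Y-t)\le \exp\Bigl(-\lambda t+\frac{\lambda^2}{8}\sum_i c_i^2\Bigr).
\]
Choosing $\lambda=4t/\sum_i c_i^2$ produces exactly $\exp\bigl(-2t^2/\sum_i c_i^2\bigr)$, which is \eqref{eq:mcdiarmid}. The case $t=0$ is trivial. If some $c_i=0$, the corresponding $D_i$ vanishes, and the argument goes through unchanged provided $\sum_i c_i^2>0$; otherwise $Y$ is constant and there is nothing to prove.
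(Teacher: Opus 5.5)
Your proof is correct. The paper does not prove this statement; it only cites McDiarmid (1989), and your argument is the standard proof behind that citation: the Doob martingale, the $\mathcal{F}_{i-1}$-measurable interval $[L_i,U_i]$ of length at most $c_i$, conditional Hoeffding's lemma, and the choice $\lambda=4t/\sum_i c_i^2$.
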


\begin{theorem}\label{thm:azuma}
Let $(\mathcal G_j)_{j=0}^N$ be a filtration and let $(Z_j)_{j=0}^N$ be a martingale such that
\[
 \abs{Z_j-Z_{j-1}}\leq H
 \qquad\text{almost surely for every }1\leq j\leq N.
\]
Then, for every $t\geq0$,
\begin{equation}\label{eq:azuma}
 \Prob(Z_N-Z_0\geq t)
 \leq \exp\!\left(-\frac{t^2}{2NH^2}\right).
\end{equation}
\end{theorem}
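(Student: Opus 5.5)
The plan is the standard exponential-moment (Chernoff) argument applied to the martingale differences. Write $D_j:=Z_j-Z_{j-1}$ for $1\le j\le N$, so that $Z_N-Z_0=\sum_{j=1}^N D_j$. Each $D_j$ is $\mathcal G_j$-measurable and satisfies $\E[D_j\mid\mathcal G_{j-1}]=0$ by the martingale property. By hypothesis $\abs{D_j}\le H$ almost surely. Fix $\lambda>0$; at the end I will optimise over $\lambda$.

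\textbf{Step 1: conditional Hoeffding lemma.} I will show that
\[
 \E\!\left[e^{\lambda D_j}\,\middle|\,\mathcal G_{j-1}\right]\le e^{\lambda^2H^2/2}
\]
almost surely. Since $x\mapsto e^{\lambda x}$ is convex on $[-H,H]$, we have pointwise
\[
 e^{\lambda D_j}\le \frac{H-D_j}{2H}e^{-\lambda H}+\frac{H+D_j}{2H}e^{\lambda H}.
\]
Taking conditional expectations and using $\E[D_j\mid\mathcal G_{j-1}]=0$ gives the bound $\cosh(\lambda H)$. The elementary inequality $\cosh u\le e^{u^2/2}$ then finishes the step; it follows by comparing Taylor series, since $(2k)!\ge 2^k k!$.

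\textbf{Step 2: iterate by the tower property.} Let $W_j:=\sum_{i\le j}D_i$. Because $W_{j-1}$ is $\mathcal G_{j-1}$-measurable, Step 1 gives
\[
 \E e^{\lambda W_j}=\E\!\left[e^{\lambda W_{j-1}}\,\E\!\left[e^{\lambda D_j}\,\middle|\,\mathcal G_{j-1}\right]\right]\le e^{\lambda^2H^2/2}\,\E e^{\lambda W_{j-1}}.
\]
Induction on $j$ yields $\E e^{\lambda(Z_N-Z_0)}\le e^{N\lambda^2H^2/2}$. All quantities are finite, since the differences are bounded.

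\textbf{Step 3: Markov and optimisation.} Markov's inequality gives
\[
 \Prob(Z_N-Z_0\ge t)\le e^{-\lambda t}\,\E e^{\lambda(Z_N-Z_0)}\le \exp\!\left(-\lambda t+\frac{N\lambda^2H^2}{2}\right).
\]
Choosing $\lambda=t/(NH^2)$ produces exactly $\exp\!\left(-t^2/(2NH^2)\right)$, which is \eqref{eq:azuma}. The degenerate cases $t=0$ and $H=0$ are trivial: the bound is $1$ when $t=0$, and $Z_N=Z_0$ almost surely when $H=0$.

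\textbf{Main obstacle.} The only nontrivial ingredient is Step 1, the conditional Hoeffding lemma. The care needed there is to apply the convexity bound pointwise, before conditioning, and to invoke the centering only at the conditional level. This is why the martingale property, rather than independence of the increments, suffices.
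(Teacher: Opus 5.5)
Your proof is correct: the convexity bound gives the conditional Hoeffding lemma $\E[e^{\lambda D_j}\mid\mathcal G_{j-1}]\le\cosh(\lambda H)\le e^{\lambda^2H^2/2}$, the tower property iterates it, and the Chernoff bound with $\lambda=t/(NH^2)$ gives exactly $\exp(-t^2/(2NH^2))$. The paper gives no proof of its own and simply quotes this as Azuma's inequality; your argument is the standard proof behind that citation.
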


We shall also use the following first-moment selection principle.

\begin{lemma}\label{lem:first-moment}
Let $E_1,\dots,E_q$ be arbitrary events, not necessarily independent, satisfying $\Prob(E_i^c\mid\mathcal G)\leq\delta$ almost surely for every $i$, where $\mathcal G$ is a sigma-algebra.
Then
\begin{equation}\label{eq:first-moment-selection}
 \Prob\!\left(
   \#\{i:E_i^c\}>\frac q2
   \,\middle|\,\mathcal G
 \right)
 \leq 2\delta.
\end{equation}
\end{lemma}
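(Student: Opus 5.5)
Set $N:=\#\{i:E_i^c\}=\sum_{i=1}^q \1_{E_i^c}$, which counts the failed events. The plan is to apply the conditional Markov inequality to $N$, so only the conditional expectation of $N$ is needed. By linearity of conditional expectation, which requires no independence among the events,
\[
 \E\!\left(N\,\middle|\,\mathcal G\right)
 =\sum_{i=1}^q \Prob\!\left(E_i^c\,\middle|\,\mathcal G\right)
 \leq q\delta
 \qquad\text{almost surely.}
\]

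Since $N\geq0$, conditional Markov's inequality then gives, almost surely,
\[
 \Prob\!\left(N>\frac q2\,\middle|\,\mathcal G\right)
 \leq \frac{\E(N\mid\mathcal G)}{q/2}
 \leq \frac{q\delta}{q/2}=2\delta,
\]
which is \eqref{eq:first-moment-selection}.

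The only point needing care is that conditional Markov holds almost surely. To check it, start from the pointwise inequality $\1_{\{N>q/2\}}\leq \frac{2N}{q}$. Then take conditional expectations, using that conditional expectation is monotone and linear up to null sets.

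The finitely many almost-sure bounds $\Prob(E_i^c\mid\mathcal G)\leq\delta$, for $i=1,\dots,q$, hold simultaneously outside a single null set, because a finite union of null sets is null. So no step is a real obstacle.
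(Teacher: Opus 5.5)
Your proof is correct and matches the paper's argument: linearity of conditional expectation gives $\E\!\left(\sum_{i=1}^q \1_{E_i^c}\,\middle|\,\mathcal G\right)\leq q\delta$, and conditional Markov then yields the bound $2\delta$. Your added remarks, that conditional Markov holds almost surely and that the finitely many almost-sure bounds hold off a single null set, are the only details the paper leaves implicit.
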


\begin{proof}
Note that $\E(\sum_{i=1}^q\1_{E_i^c}\mid\mathcal G)\leq q\delta$.
Conditional Markov gives
\[
 \Prob(\sum_{i=1}^q\1_{E_i^c}>q/2\mid\mathcal G)
 \leq \frac{2}{q}\E(\sum_{i=1}^{q}\mathbf 1_{E_i^c}\mid\mathcal G)
 \leq2\delta.
\]

\end{proof}

\subsection{A multivariate Berry--Esseen theorem}

We use the convex set Berry--Esseen theorem of Rai\v{c}~\cite{Raic2019}, which gives an explicit version of the optimal $d^{1/4}$ dependence established by Bentkus~\cite{Bentkus2005}.

\begin{theorem}\label{thm:raic}
Let $Y_1,\dots,Y_N$ be independent centered random vectors in $\R^d$ such that
\[
 \sum_{i=1}^N \E(Y_iY_i^{\mathsf T})=\Id_d.
\]
Let $G\sim N(0,\Id_d)$.
Then, for every measurable convex set $K\subseteq\R^d$,
\begin{equation}\label{eq:raic}
 \abs{
   \Prob\!\left(\sum_{i=1}^N Y_i\in K\right)-\Prob(G\in K)
 }
 \leq (42d^{1/4}+16)\sum_{i=1}^N\E\norm{Y_i}_2^3.
\end{equation}
\end{theorem}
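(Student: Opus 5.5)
The plan is to follow the inductive Stein/Lindeberg scheme of Bentkus, with explicit constants tracked as in Rai\v{c}. Write $W=\sum_{i=1}^N Y_i$ and $\beta=\sum_i\E\norm{Y_i}_2^3$. Let $\delta(\beta)$ be the supremum of $\abs{\Prob(W\in K)-\Prob(G\in K)}$ over all convex $K$ and all admissible families with Lyapunov ratio at most $\beta$. The target is $\delta(\beta)\le(42d^{1/4}+16)\beta$. When $\beta\ge 1/(42d^{1/4}+16)$ there is nothing to prove, since the left side is at most $1$. So I may assume $\beta$ is small, and I will argue by induction on the number of summands, or equivalently a bootstrap in $\beta$.

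\textbf{Step 1 (Gaussian geometry).} For a convex $K$ and $\varepsilon>0$, let $K^{\varepsilon}$ be the outer $\varepsilon$-neighbourhood and $K^{-\varepsilon}$ the inner parallel body. I would invoke Ball's bound on the Gaussian surface area of convex sets, in Nazarov's form $\gamma_d(\partial K)\le 4d^{1/4}$. Integrating over the layers gives
\[
\Prob(G\in K^{\varepsilon}\setminus K^{-\varepsilon})\le 8d^{1/4}\varepsilon .
\]
This is the only place where $d^{1/4}$ enters, and it produces the $42d^{1/4}$ part of the constant.

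\textbf{Step 2 (smoothing).} Build a test function $h$ with $\1_{K^{-\varepsilon}}\le h\le\1_{K}$ and its outer analogue. Rather than demanding a bounded third derivative, which would cost $\varepsilon^{-3}$ and be far too lossy, I would use the Gaussian-smoothed solution of the Stein equation,
\[
f_h=-\int_0^\infty\bigl(P_th-\gamma_d(h)\bigr)\,dt ,
\]
where $P_t$ is the Ornstein--Uhlenbeck semigroup. The point is that only two derivatives are taken on $h$, with bounds of order $\varepsilon^{-1}$. The third derivative is put onto the Gaussian kernel, which produces an integrable singularity in $t$.

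\textbf{Step 3 (Stein expansion with the inductive hypothesis).} Write $\E h(W)-\gamma_d(h)=\E[\Delta f_h(W)-\ip{W}{\nabla f_h(W)}]$. Expand by leave-one-out: $W^{(i)}=W-Y_i$, with a Taylor remainder in $Y_i$. The remainder has the form $\E\norm{Y_i}^3$ times an average of $\abs{D^3f_h}$ evaluated at $W^{(i)}+\theta Y_i$. That average is an integral against a smooth kernel concentrated near $\partial K$. To estimate it, I replace the law of $W^{(i)}$ by the Gaussian, at the cost of $\delta(\beta)$ itself, by the inductive hypothesis applied to the $N-1$ summands after renormalisation. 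The Gaussian contribution is then bounded via Step 1. The result should be a recursion of the form
\[
\delta(\beta)\le C_1d^{1/4}\varepsilon+C_2\,\frac{\beta}{\varepsilon}\bigl(d^{1/4}\varepsilon+\delta(\beta)\bigr)\log\tfrac1\varepsilon+C_3\beta .
\]
After a more careful split of the $t$-integral, the logarithm disappears.

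\textbf{Step 4 (closing the recursion).} Choose $\varepsilon$ to be a suitable constant multiple of $\beta$, say $\varepsilon=\kappa\beta$ with $\kappa$ large enough that $C_2\beta/\varepsilon\le 1/2$. Absorbing the $\delta(\beta)$ term on the left then gives $\delta(\beta)\le C d^{1/4}\beta+C'\beta$.

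The main obstacle is Step 3. Two things must be done with care: the renormalisation of $W^{(i)}$, whose covariance is $\Id_d-\E Y_iY_i^{\mathsf T}$ rather than $\Id_d$, so that the hypothesis applies to it; and the bookkeeping of the $t$-integral near $t=0$, so that the constants come out as $42$ and $16$ rather than merely absolute. For the covariance issue, I would first try Rai\v{c}'s device of comparing to a Gaussian with the slightly deficient covariance. Since convex sets are stable under linear maps, the inductive bound transfers with only a $(1-\E\norm{Y_i}^2)^{-1/2}$ distortion, which is harmless in the small-$\beta$ regime.
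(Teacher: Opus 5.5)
The paper does not prove this theorem. It quotes it from Rai\v{c} and uses it only as a black box, in fact only in the weakened form $Cd^{1/4}\sum_i\E\norm{Y_i}_2^3$ (see \eqref{eq:BE-error}). So your sketch has to stand on its own, and it has a genuine gap at exactly the step that makes the result nontrivial.

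\textbf{The logarithm is not removed.} Your Step 3 recursion carries $\log(1/\varepsilon)$ on two terms: the Gaussian term, which equals $C_2d^{1/4}\beta\log(1/\varepsilon)$, and the $\delta$-term. With $\varepsilon=\kappa\beta$ neither term is $O(\beta)$. Absorbing the $\delta$-term forces $\varepsilon\gtrsim\beta\log(1/\beta)$, so the argument as written yields only $d^{1/4}\beta\log(1/\beta)$. The sentence ``after a more careful split of the $t$-integral, the logarithm disappears'' carries the whole difficulty, and with your mechanism it is false. You bound the Gaussian contribution through the boundary-layer estimate of Step 1, i.e.\ with absolute values at each Ornstein--Uhlenbeck time $t$. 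Already for a half-line in $d=1$ one has $\E\abs{D^3P_th(G)}\asymp 1/t$ for $\varepsilon^2\lesssim t\leq 1$. Hence $\int_{\varepsilon^2}^1$ produces $\log(1/\varepsilon)$ however the interval is cut: the Gaussian part is small only as $\abs{\E\,\cdot\,}$, not as $\E\abs{\cdot}$. Step 2 is also inconsistent. Two derivatives of an $\varepsilon$-smoothing are of size $\varepsilon^{-2}$, not $\varepsilon^{-1}$. With one derivative on $h$ and two on the kernel, the singularity $t^{-1}$ is \emph{not} integrable, and this is exactly the source of your logarithm.

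\textbf{What is missing} is a separate treatment of the Gaussian part and of the non-Gaussian discrepancy. Let $G'\sim N(0,\Id_d-\E Y_iY_i^{\mathsf T})$.
\begin{enumerate}[label=(\roman*),leftmargin=2.5em]
\item Do not bound the Gaussian part in absolute value. Gaussian integration by parts gives $\E\,D^3f_h(G'+y)[y^{\otimes 3}]=O(\norm{y}_2^3)$ with a dimension-free constant.
\item For times $t\gtrsim\varepsilon^2$, put all three derivatives on the kernel:
\[
D^3P_th(x)=-\frac{e^{-3t}}{(1-e^{-2t})^{3/2}}\int h\bigl(e^{-t}x+\sqrt{1-e^{-2t}}\,z\bigr)\,D^3\phi(z)\,dz ,
\]
with $\phi$ the standard Gaussian density. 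Take $h$ to be an average of indicators of parallel bodies of $K$, which are convex. Then the discrepancy between $W^{(i)}$ and $G'$ costs $C\delta\,t^{-3/2}$, which integrates to $C\delta/\varepsilon$.
\item The perimeter bound is then needed only for $t\lesssim\varepsilon^2$ and in the smoothing step.
\end{enumerate}
This yields
\[
\delta_N\leq C\gamma\varepsilon+C\beta(1+\gamma)+C\beta\,\delta_{N-1}/\varepsilon ,
\]
with $\gamma$ the Gaussian-perimeter constant, and this does close at $\varepsilon\asymp\beta$. It must be run as an induction on $N$ for the ratio $\delta/\beta$, which is finite a priori since $\beta\geq d^{3/2}N^{-1/2}$. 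It is not equivalent to a bootstrap in $\beta$, because renormalising $W^{(i)}$ raises the Lyapunov ratio.

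\textbf{The explicit constants are not derived.} Even after this repair, your route gives $(Cd^{1/4}+C')\beta$ with unspecified constants, not $42d^{1/4}+16$. Nothing in the sketch computes these numbers. Also, $4d^{1/4}$ is Ball's perimeter bound, not Nazarov's, whose constant is smaller. Rai\v{c}'s explicit constants come from a long explicit computation together with a sharpened perimeter constant. The weaker form with unspecified constants would suffice for the paper's application, but not for the inequality as stated.
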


\subsection{The Hanson--Wright inequality}

We use the Rademacher case of the Hanson--Wright inequality in the form proved by Rudelson and Vershynin~\cite{RudelsonVershynin2013}.

\begin{theorem}\label{thm:HW}
Let $\varepsilon=(\varepsilon_1,\dots,\varepsilon_N)$ have independent uniform sign coordinates, and let $A$ be a deterministic real symmetric $N\times N$ matrix.
Then, for every $t\geq0$,
\begin{equation}\label{eq:HW}
 \Prob\!\left(
   \abs{\varepsilon^{\mathsf T}A\varepsilon-\tr A}>t
 \right)
 \leq
 2\exp\!\left[
   -c\min\!\left\{
     \frac{t^2}{\norm{A}_F^2},
     \frac{t}{\norm{A}_{\op}}
   \right\}
 \right].
\end{equation}
\end{theorem}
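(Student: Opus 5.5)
We take the statement to be the Hanson--Wright inequality, Theorem~\ref{thm:HW}, in the Rademacher case. The plan is the standard decoupling-plus-moment-generating-function argument. We split $\varepsilon^{\mathsf T}A\varepsilon-\tr A$ into its diagonal and off-diagonal parts. Since $\varepsilon_i^2=1$, the diagonal part is exactly $\sum_i A_{ii}=\tr A$, so it contributes no fluctuation. It therefore suffices to bound the off-diagonal chaos $Z:=\sum_{i\neq j}A_{ij}\varepsilon_i\varepsilon_j$ and to show
\[
 \E e^{\lambda Z}\leq\exp\!\left(C\lambda^2\norm{A}_F^2\right)
 \qquad\text{for }\abs{\lambda}\leq \frac{c}{\norm{A}_{\op}}.
\]
With this bound, Markov's inequality applied to $e^{\lambda Z}$ and $e^{-\lambda Z}$, and optimization over $\lambda$ in the admissible range, give the two-regime tail $2\exp[-c\min\{t^2/\norm{A}_F^2,\,t/\norm{A}_{\op}\}]$.

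To bound the moment generating function, we first decouple. Bourgain's decoupling inequality, applied with a random subset $\Lambda$ of the indices, gives
\[
 \E e^{\lambda Z}\leq \E e^{4\lambda\,\varepsilon^{\mathsf T}A\varepsilon'},
\]
where $\varepsilon'$ is an independent copy of $\varepsilon$. The argument averages over $\Lambda$ and uses Jensen's inequality. Next we condition on $\varepsilon'$. The form $\varepsilon^{\mathsf T}(A\varepsilon')$ is then a Rademacher sum, and Hoeffding's lemma gives
\[
 \E_\varepsilon e^{4\lambda\,\varepsilon^{\mathsf T}A\varepsilon'}\leq \exp\!\left(8\lambda^2\norm{A\varepsilon'}_2^2\right).
\]

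The remaining task, which we expect to be the main obstacle, is to bound $\E\exp(8\lambda^2\norm{A\varepsilon'}_2^2)$. For this we compare with Gaussians. Since a Rademacher variable is sub-Gaussian, we can replace $\varepsilon'$ by a standard Gaussian vector $g$ at the cost of absolute constants. One route is the inequality $\E e^{\ip{u}{\varepsilon'}}\leq \E e^{\ip{u}{g}}$ combined with the representation $e^{s\norm{x}^2/2}=\E_{g''}e^{\sqrt s\ip{x}{g''}}$; applying this twice swaps $\varepsilon'$ for $g$.

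With a Gaussian vector, rotation invariance reduces the quantity to
\[
 \prod_i \E e^{8\lambda^2 s_i^2 g_i^2}=\prod_i\left(1-16\lambda^2 s_i^2\right)^{-1/2},
\]
where the $s_i$ are the singular values of $A$. When $16\lambda^2 s_i^2\leq 1/2$ for every $i$, that is, for $\abs{\lambda}\leq c/\norm{A}_{\op}$, the inequality $(1-x)^{-1/2}\leq e^{x}$ on $[0,1/2]$ bounds this product by $\exp(C\lambda^2\sum_i s_i^2)=\exp(C\lambda^2\norm{A}_F^2)$. This establishes the moment generating function bound and completes the argument.

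Alternatively, since the statement is precisely the Rudelson--Vershynin theorem specialized to Rademacher entries, whose $\psi_2$ norm is bounded by an absolute constant, one could simply cite \cite{RudelsonVershynin2013} and absorb that constant into $c$.
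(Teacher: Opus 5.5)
Your proposal is correct and matches the paper. The paper gives no proof of its own; it imports the Rademacher case of Rudelson--Vershynin, and your sketch is exactly their argument: decoupling via a random subset, Hoeffding after conditioning on $\varepsilon'$, Gaussian comparison, and rotation invariance. It is slightly simplified here because $\varepsilon_i^2=1$ makes the diagonal part equal to $\tr A$, so no separate Bernstein step is needed. The one step worth writing out is the passage from the block form $\varepsilon_\Lambda^{\mathsf T}A\varepsilon'_{\Lambda^c}$ to the full bilinear form $\varepsilon^{\mathsf T}A\varepsilon'$. This is a second conditional Jensen step, using that the coordinates are centered, and it remains valid even when the diagonal of $A$ is kept.
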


\subsection{A tail-integration identity}

For $x>0$, write $\logp x:=\max\{\log x,0\}$.
The convention $\logp(T/0)=+\infty$ will always be used.

\begin{lemma}\label{lem:tail-integration}
If $Y\geq0$, $T>0$, and $H>0$, then
\begin{equation}\label{eq:tail-integration}
 \E\min\!\left\{\logp\frac{T}{Y},H\right\}
 =\int_0^H \Prob(Y\leq e^{-t}T)\,dt.
\end{equation}
\end{lemma}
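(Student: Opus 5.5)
The identity is a layer-cake computation: write the truncated logarithm as an integral of an indicator, then swap expectation and integral by Tonelli. The idea is to set
\[
 Z:=\min\!\left\{\logp\frac{T}{Y},H\right\},
\]
which takes values in $[0,H]$. Here the convention $\logp(T/0)=+\infty$ gives $Z=H$ when $Y=0$. For any random variable $Z$ with values in $[0,H]$ we have the pointwise identity
\[
 Z=\int_0^H \1\{t<Z\}\,dt .
\]
Since the integrand is nonnegative and jointly measurable in $(t,\omega)$, Tonelli's theorem gives
\[
 \E Z=\int_0^H \Prob(Z>t)\,dt .
\]

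Next, for each fixed $t\in[0,H)$, we identify the event $\{Z>t\}$. Since $t<H$, the condition $Z>t$ is equivalent to $\logp(T/Y)>t$. Because $t\geq 0$, this is in turn equivalent to $\log(T/Y)>t$, that is, to $Y<e^{-t}T$. This chain also covers the case $Y=0$: there both sides hold, because $T>0$. Hence
\[
 \E Z=\int_0^H \Prob(Y<e^{-t}T)\,dt .
\]

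Finally, we replace the strict inequality by $\leq$. The function $t\mapsto \Prob(Y\leq e^{-t}T)$ is monotone. The two integrands, $\Prob(Y<e^{-t}T)$ and $\Prob(Y\leq e^{-t}T)$, differ only at those $t$ for which $\Prob(Y=e^{-t}T)>0$. Since $t\mapsto e^{-t}T$ is injective, these $t$ correspond to atoms of the law of $Y$, and there are at most countably many atoms. So the two integrands agree for Lebesgue-almost every $t$, the integrals coincide, and this yields \eqref{eq:tail-integration}.

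Nothing here is a real obstacle. The only points needing care are the endpoint $t=H$, which has measure zero, the convention at $Y=0$, and this strict-versus-non-strict inequality step.
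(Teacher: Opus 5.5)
Your proof is correct and follows the same layer-cake argument as the paper. The only difference is that the paper uses $\E Z=\int_0^H\Prob(Z\geq t)\,dt$ and identifies $\{Z\geq t\}=\{Y\leq e^{-t}T\}$ directly for $t\in(0,H]$, which makes your final step about atoms of the law of $Y$ unnecessary (that step is nonetheless valid).
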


\begin{proof}
For any nonnegative random variable $Z$ bounded by $H$,
\[
 \E Z=\int_0^H\Prob(Z\geq t)\,dt.
\]
Take $Z=\min\{\logp(T/Y),H\}$.
For every $t\in(0,H]$,
\[
 \{Z\geq t\}=\{\logp(T/Y)\geq t\}=\{Y\leq e^{-t}T\}.
\]
The possible discrepancy at $t=0$ has Lebesgue measure zero, so \eqref{eq:tail-integration} follows.
\end{proof}

\section{Deterministic energy growth to the middle level}\label{sec:boolean}

This section proves that the squared minor energy $S_k$ has reached the factorial scale by level $\floor{n/2}$, without any probabilistic loss. For \(0\leq k\leq n\), let \(\cH_k=\R^{\binom{[n]}{k}}\), equipped with the usual Euclidean inner product. Define the up-operator $U_k:\cH_k\to\cH_{k+1}$ by
\begin{equation}\label{eq:up-operator}
 (U_kf)(B)=\sum_{i\in B}f(B\setminus\{i\}),
 \qquad B\in\binom{[n]}{k+1}.
\end{equation}
Let $D_{k+1}:=U_k^*$, so
\begin{equation}\label{eq:down-operator}
 (D_{k+1}g)(A)=\sum_{j\notin A}g(A\cup\{j\}),
 \qquad A\in\binom{[n]}{k}.
\end{equation}
We set $D_0=0$. The following commutation relations for the upper and lower operators on a Boolean lattice date back at least to Stanley~\cite{Stanley1991}. For completeness, we present a short proof and the resulting \(L^2\) lower bound.

\begin{lemma}[]\label{lem:boolean-commutation}
For every $0\leq k\leq n-1$,
\begin{equation}\label{eq:boolean-commutation}
 D_{k+1}U_k-U_{k-1}D_k=(n-2k)\Id_{\cH_k}.
\end{equation}
Consequently, for every $f\in\cH_k$,
\begin{equation}\label{eq:up-lower-bound}
 \norm{U_kf}_2^2
 =(n-2k)\norm{f}_2^2+\norm{D_kf}_2^2.
\end{equation}
In particular, if $k<n/2$, then
\begin{equation}\label{eq:up-singular-lower}
 \norm{U_kf}_2^2\geq(n-2k)\norm{f}_2^2.
\end{equation}
\end{lemma}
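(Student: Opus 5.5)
The plan is to verify the operator identity \eqref{eq:boolean-commutation} entrywise on the standard basis of $\cH_k$. Then I will deduce \eqref{eq:up-lower-bound} by taking an inner product with $f$, and \eqref{eq:up-singular-lower} by dropping a nonnegative term.

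For the identity, fix $A,A'\in\binom{[n]}{k}$ and compute the $(A,A')$ matrix entries of both compositions. From \eqref{eq:up-operator} and \eqref{eq:down-operator},
\[
 (D_{k+1}U_kf)(A)=\sum_{j\notin A}\ \sum_{i\in A\cup\{j\}} f\bigl((A\cup\{j\})\setminus\{i\}\bigr).
\]
The term $i=j$ returns $f(A)$ once for each of the $n-k$ choices of $j$. The terms $i\neq j$, with $i\in A$ and $j\notin A$, return $f(A')$ for $A'=(A\setminus\{i\})\cup\{j\}$, which is a set with $\abs{A\triangle A'}=2$. Each such $A'$ arises exactly once, because $i$ and $j$ are determined by $A'$. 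Similarly,
\[
 (U_{k-1}D_kf)(A)=\sum_{i\in A}\ \sum_{j\notin A\setminus\{i\}} f\bigl((A\setminus\{i\})\cup\{j\}\bigr).
\]
Here the term $j=i$ returns $f(A)$ once for each of the $k$ choices of $i$. The terms $j\neq i$ (so $j\notin A$) give the same family of neighbours $A'$, again each exactly once. Subtracting, the off-diagonal contributions cancel and the diagonal coefficient is $(n-k)-k=n-2k$.

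For $k=0$ the second term is zero, since $D_0=0$ by convention. The first computation then gives $n\,\Id$, which agrees with the formula. The case $k=n$ is excluded by the range $0\le k\le n-1$, so $U_k$ is always well defined.

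For \eqref{eq:up-lower-bound}, I use $D_{k+1}=U_k^*$ and $D_k=U_{k-1}^*$ (valid for $k\ge1$; trivial for $k=0$). This gives
\[
 \norm{U_kf}_2^2=\ip{D_{k+1}U_kf}{f}
 =(n-2k)\norm{f}_2^2+\ip{U_{k-1}D_kf}{f},
\]
and the last inner product equals $\ip{D_kf}{D_kf}=\norm{D_kf}_2^2$. When $k<n/2$, the coefficient $n-2k$ is positive, and discarding $\norm{D_kf}_2^2\ge0$ yields \eqref{eq:up-singular-lower}.

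There is no real obstacle here. The only point that needs care is the bookkeeping showing that each neighbour $A'$ appears exactly once in both compositions.
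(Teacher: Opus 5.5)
Your proof is correct and follows the paper's argument essentially verbatim: you expand $D_{k+1}U_k$ and $U_{k-1}D_k$ at a fixed $A$, the diagonal terms contribute $n-k$ and $k$, and the swap terms $(A\setminus\{i\})\cup\{j\}$ coincide and cancel; you then pair with $f$ and use $D_k=U_{k-1}^*$. Your explicit handling of the $k=0$ case via the convention $D_0=0$ is a small addition that the paper leaves implicit.
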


\begin{proof}
Fix $A\in\binom{[n]}{k}$.
By \eqref{eq:up-operator} and \eqref{eq:down-operator},
\begin{equation}
\begin{aligned}
 (D_{k+1}U_kf)(A)
 &=\sum_{j\notin A}(U_kf)(A\cup\{j\})\\
 &=\sum_{j\notin A}\sum_{i\in A\cup\{j\}}
      f((A\cup\{j\})\setminus\{i\})\\
 &=(n-k)f(A)
   +\sum_{j\notin A}\sum_{i\in A}
       f((A\setminus\{i\})\cup\{j\}).
\end{aligned}
\label{eq:DUp}
\end{equation}
Similarly,
\begin{equation}
\begin{aligned}
 (U_{k-1}D_kf)(A)
 &=\sum_{i\in A}(D_kf)(A\setminus\{i\})\\
 &=\sum_{i\in A}\sum_{j\notin A\setminus\{i\}}
       f((A\setminus\{i\})\cup\{j\})\\
 &=k f(A)
   +\sum_{i\in A}\sum_{j\notin A}
       f((A\setminus\{i\})\cup\{j\}).
\end{aligned}
\label{eq:UDp}
\end{equation}
The double sums in \eqref{eq:DUp} and \eqref{eq:UDp} are identical, so subtraction gives \eqref{eq:boolean-commutation}.
Taking the inner product with $f$ yields
\begin{align*}
 \norm{U_kf}_2^2
 &=\ip{D_{k+1}U_kf}{f}\\
 &=(n-2k)\norm{f}_2^2+\ip{U_{k-1}D_kf}{f}\\
 &=(n-2k)\norm{f}_2^2+\norm{D_kf}_2^2,
\end{align*}
which proves \eqref{eq:up-lower-bound} and \eqref{eq:up-singular-lower}.
\end{proof}

For a sign vector $\varepsilon=(\varepsilon_1,\dots,\varepsilon_n)$, consider the diagonal orthogonal map $J_k^{\varepsilon}:\cH_k\to\cH_k$ by
\begin{equation}\label{eq:J-def}
 (J_k^{\varepsilon}f)(A)=
 \left(\prod_{i\in A}\varepsilon_i\right)f(A).
\end{equation}
Also define the signed up-operator
\begin{equation}\label{eq:signed-up}
 (T_k^{\varepsilon}f)(B)
 =\sum_{i\in B}\varepsilon_i f(B\setminus\{i\}).
\end{equation}

\begin{lemma}\label{lem:sign-conjugacy}
For every $k$ and every sign vector $\varepsilon$,
\begin{equation}\label{eq:sign-conjugacy}
 T_k^{\varepsilon}=J_{k+1}^{\varepsilon}U_kJ_k^{\varepsilon}.
\end{equation}
Consequently, $T_k^{\varepsilon}$ and $U_k$ have the same singular values.
\end{lemma}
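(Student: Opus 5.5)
The identity \eqref{eq:sign-conjugacy} will be checked pointwise: apply both sides to an arbitrary $f\in\cH_k$ and evaluate at an arbitrary $B\in\binom{[n]}{k+1}$. The singular-value statement then follows because $J_k^{\varepsilon}$ and $J_{k+1}^{\varepsilon}$ are orthogonal.

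First, by \eqref{eq:J-def}, $(J_k^{\varepsilon}f)(A)=\varepsilon_A f(A)$, where $\varepsilon_A:=\prod_{i\in A}\varepsilon_i$. Next, by \eqref{eq:up-operator},
\[
 (U_kJ_k^{\varepsilon}f)(B)=\sum_{i\in B}\varepsilon_{B\setminus\{i\}}\,f(B\setminus\{i\}).
\]
Applying $J_{k+1}^{\varepsilon}$ multiplies this by $\varepsilon_B$. Since $\varepsilon_i^2=1$ and $i\in B$, we have
\[
 \varepsilon_B\,\varepsilon_{B\setminus\{i\}}=\varepsilon_i\prod_{l\in B\setminus\{i\}}\varepsilon_l^2=\varepsilon_i .
\]
Hence
\[
 (J_{k+1}^{\varepsilon}U_kJ_k^{\varepsilon}f)(B)=\sum_{i\in B}\varepsilon_i f(B\setminus\{i\})=(T_k^{\varepsilon}f)(B),
\]
which is exactly \eqref{eq:signed-up}.

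For the consequence, each $J_k^{\varepsilon}$ is diagonal with entries $\pm1$ in the standard basis of $\cH_k$. It is therefore a symmetric orthogonal involution: $(J_k^{\varepsilon})^{*}=J_k^{\varepsilon}=(J_k^{\varepsilon})^{-1}$. So $T_k^{\varepsilon}=O_1U_kO_2$ with $O_1,O_2$ orthogonal. Then
\[
 (T_k^{\varepsilon})^{*}T_k^{\varepsilon}=O_2^{*}\,U_k^{*}U_k\,O_2
\]
is orthogonally similar to $U_k^*U_k$, so the two have the same eigenvalues. Equivalently, composing an SVD of $U_k$ with $O_1,O_2$ gives an SVD of $T_k^{\varepsilon}$. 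In particular, \eqref{eq:up-singular-lower} transfers to give $\norm{T_k^{\varepsilon}f}_2^2\geq(n-2k)\norm{f}_2^2$ for $k<n/2$.

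There is no real obstacle here. The only point needing care is the sign bookkeeping $\varepsilon_B\varepsilon_{B\setminus\{i\}}=\varepsilon_i$, which uses both $\varepsilon_l^2=1$ and $i\in B$. The edge case $k=0$ is consistent with the same computation, since $\varepsilon_\emptyset=1$.
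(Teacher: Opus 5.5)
Your proof is correct and matches the paper's argument: evaluate both sides pointwise at $B\in\binom{[n]}{k+1}$ using $\prod_{j\in B}\varepsilon_j\prod_{j\in B\setminus\{i\}}\varepsilon_j=\varepsilon_i$, then note that the diagonal $\pm1$ maps $J_k^{\varepsilon}$ and $J_{k+1}^{\varepsilon}$ are orthogonal, so singular values are preserved. Your explicit similarity $(T_k^{\varepsilon})^{*}T_k^{\varepsilon}=J_k^{\varepsilon}U_k^{*}U_kJ_k^{\varepsilon}$ just spells out a step the paper leaves implicit.
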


\begin{proof}
For $B\in\binom{[n]}{k+1}$,
\begin{align*}
 (J_{k+1}^{\varepsilon}U_kJ_k^{\varepsilon}f)(B)
 &=\left(\prod_{j\in B}\varepsilon_j\right)
   \sum_{i\in B}
   \left(\prod_{j\in B\setminus\{i\}}\varepsilon_j\right)
   f(B\setminus\{i\})\\
 &=\sum_{i\in B}\varepsilon_i f(B\setminus\{i\}),
\end{align*}
which is \eqref{eq:signed-up}.
Each $J_k^{\varepsilon}$ is orthogonal, so the singular values are unchanged by the conjugacy.
\end{proof}

\begin{proposition}\label{prop:middle-energy}
Let $m=\floor{n/2}$.
Then, for every realization of the first $m$ rows,
\begin{equation}\label{eq:middle-energy-product}
 S_m\geq\prod_{k=0}^{m-1}(n-2k)\geq m!.
\end{equation}
\end{proposition}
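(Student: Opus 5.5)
Think of the vector of row-$k$ minors $X_k=(X_k(A))_{A}\in\cH_k$ and check that adding row $k+1$ acts on it as a signed up-operator. Laplace expansion of the permanent along its last row gives, for $B\in\binom{[n]}{k+1}$,
\[
 X_{k+1}(B)=\sum_{i\in B}\xi_{k+1,i}\,X_k(B\setminus\{i\}).
\]
No signs appear here, because the permanent expands without the alternating signs of the determinant. With $\varepsilon=(\xi_{k+1,1},\dots,\xi_{k+1,n})$ this reads $X_{k+1}=T_k^{\varepsilon}X_k$ in the notation of \eqref{eq:signed-up}. The convention $X_0(\emptyset)=1$ gives $S_0=1$.

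The next step is to transfer the lower singular-value bound to the signed operator. Lemma~\ref{lem:sign-conjugacy} gives $T_k^{\varepsilon}=J_{k+1}^{\varepsilon}U_kJ_k^{\varepsilon}$, and both $J$'s are orthogonal. Hence
\[
 S_{k+1}=\norm{T_k^{\varepsilon}X_k}_2^2=\norm{U_k(J_k^{\varepsilon}X_k)}_2^2 .
\]
For $k<n/2$, apply \eqref{eq:up-singular-lower} to $f=J_k^{\varepsilon}X_k$. Since $\norm{J_k^{\varepsilon}X_k}_2=\norm{X_k}_2$, this yields
\[
 S_{k+1}\geq (n-2k)S_k .
\]
This holds for every realization of row $k+1$, so no probability enters.

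Now iterate over $k=0,1,\dots,m-1$. Each such $k$ satisfies $k\leq m-1<n/2$, so the bound applies at every step. Chaining the steps from $S_0=1$ gives $S_m\geq\prod_{k=0}^{m-1}(n-2k)$.

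For the final inequality, compare the factors termwise. For $0\leq k\leq m-1$,
\[
 n-2k\geq 2m-2k\geq m-k .
\]
Therefore $\prod_{k=0}^{m-1}(n-2k)\geq\prod_{k=0}^{m-1}(m-k)=m!$.

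The argument has no real obstacle. The only points to check are that the Laplace expansion carries no signs, so it matches $T_k^{\varepsilon}$ exactly, and that the index range keeps $k<n/2$, which is needed for $n-2k>0$.
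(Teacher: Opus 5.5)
Your proposal is correct and follows the paper's proof step for step. Like the paper, you write the cofactor recursion as $x^{(k+1)}=T_k^{\varepsilon}x^{(k)}$, use the orthogonal conjugacy with $U_k$ to get $S_{k+1}\geq(n-2k)S_k$ for $k<n/2$, iterate from $S_0=1$, and finish with the termwise bound $n-2k\geq m-k$.
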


\begin{proof}
Let $x^{(k)}\in\cH_k$ be the vector with coordinates $x^{(k)}(A)=X_k(A).$ If the $(k+1)$st row is $\varepsilon^{(k+1)}=(\xi_{k+1,1},\dots,\xi_{k+1,n})$, cofactor expansion along that row gives
\begin{equation}\label{eq:minor-recursion}
 X_{k+1}(B)=\sum_{i\in B}\xi_{k+1,i}X_k(B\setminus\{i\}),
\end{equation}
so
\[
 x^{(k+1)}=T_k^{\varepsilon^{(k+1)}}x^{(k)}.
\]
By Lemma~\ref{lem:sign-conjugacy} and Lemma~\ref{lem:boolean-commutation}, for $k<m\leq n/2$,
\[
 S_{k+1}=\norm{x^{(k+1)}}_2^2
 \geq(n-2k)\norm{x^{(k)}}_2^2
 =(n-2k)S_k.
\]
Since $S_0=1$, iteration yields the first inequality in \eqref{eq:middle-energy-product}.
For $0\leq k\leq m-1$, one has $n-2k\geq m-k$.
Therefore
\[
 \prod_{k=0}^{m-1}(n-2k)
 \geq\prod_{k=0}^{m-1}(m-k)=m!.
\]
\end{proof}

\section{A flat diagonal logarithmic small-ball estimate}\label{sec:flat-diagonal}

The next result bounds the negative logarithm of a positive-semidefinite Rademacher quadratic form under a flatness condition on its diagonal.

\begin{theorem}\label{thm:flat-diagonal}
Let $Q$ be a real symmetric positive-semidefinite $N\times N$ matrix and set $T=\tr Q>0.$ Suppose that, for some $r\geq1$,
\begin{equation}\label{eq:flat-diagonal-assumption}
 \max_{1\leq i\leq N}q_{ii}\leq\frac{T}{r}.
\end{equation}
Let $\varepsilon\in\{-1,1\}^N$ be a uniform sign vector.
There exist absolute constants $C,c>0$ such that, for every $D\geq2$ and $H\geq1$,
\begin{equation}\label{eq:flat-diagonal-conclusion}
 \E\min\!\left\{
   \logp\frac{T}{\varepsilon^{\mathsf T}Q\varepsilon},H
 \right\}
 \leq
 C+CH\left(
   \frac{D^{7/4}}{\sqrt r}+e^{-cD}
 \right).
\end{equation}
The value inside the minimum is understood to be $H$ when $\varepsilon^{\mathsf T}Q\varepsilon=0$.
\end{theorem}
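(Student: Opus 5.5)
By Lemma~\ref{lem:tail-integration},
\[
\E\min\{\logp(T/\varepsilon^{\mathsf T}Q\varepsilon),H\}=\int_0^H\Prob(\varepsilon^{\mathsf T}Q\varepsilon\le e^{-t}T)\,dt .
\]
So it suffices to show that for every $s\in(0,1]$,
\[
\Prob(\varepsilon^{\mathsf T}Q\varepsilon\le sT)\le C s^{c'}+C\Big(\frac{D^{7/4}}{\sqrt r}+e^{-cD}\Big).
\]
The first term integrates to a constant and the second contributes $CH(\cdot)$. The plan is to factor $Q=V^{\mathsf T}V$ with columns $v_1,\dots,v_N\in\R^N$, so that $\|v_i\|^2=q_{ii}$ and $\varepsilon^{\mathsf T}Q\varepsilon=\|\sum_i\varepsilon_iv_i\|^2$. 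This reduces the problem to a small-ball estimate for a Rademacher sum of vectors with $\sum\|v_i\|^2=T$ and $\max\|v_i\|^2\le T/r$.

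Next, reduce to a low-dimensional projection. Let $\lambda_1\ge\lambda_2\ge\dots$ be the eigenvalues of $Q$, so $\sum\lambda_j=T$. Two cases arise.

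\emph{Spread case.} Suppose $\lambda_1\le T/D$ (or more generally the top part is small). Then $\|Q\|_{\op}\le T/D$ and $\|Q\|_F^2\le\lambda_1T\le T^2/D$. Apply Theorem~\ref{thm:HW} with $t=T/2$: this gives $\Prob(\varepsilon^{\mathsf T}Q\varepsilon\le T/2)\le 2e^{-cD}$. Combined with the trivial bound for $s\ge 1/2$, this handles all $t$ at cost $e^{-cD}$.

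\emph{Concentrated case.} Otherwise, let $d\le D$ be the number of eigenvalues exceeding $T/D$ (there are at most $D$ of them), and let $P$ be the spectral projection onto their span, of dimension $d$. Since $\varepsilon^{\mathsf T}Q\varepsilon\ge\|\Lambda^{1/2}P\ldots\|^2$, it suffices to control the small ball of the $d$-dimensional vector $Y=\sum_i\varepsilon_i w_i$, where $w_i=\Lambda_d^{1/2}U_d^{\mathsf T}e_i$ is appropriately normalized. Its covariance is $\mathrm{diag}(\lambda_1,\dots,\lambda_d)$. Whiten to identity covariance and use Theorem~\ref{thm:raic} with the convex set $K=\{y:\sum_j\lambda_jy_j^2\le sT\}$, an ellipsoid. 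After whitening, the third-moment term is $\sum_i\|\tilde w_i\|^3\le\max_i\|\tilde w_i\|\cdot\sum_i\|\tilde w_i\|^2=d\max_i\|\tilde w_i\|$. Here $\|\tilde w_i\|^2=\sum_{j\le d}(U_{ij})^2\le$ (something) and is controlled as follows: $(U_{ij})^2\lambda_j\le q_{ii}\le T/r$, so $(U_{ij})^2\le D/r$ since $\lambda_j>T/D$. Hence $\|\tilde w_i\|\le\sqrt{dD/r}\le D/\sqrt r$, and the Berry--Esseen error is at most $C d^{1/4}\cdot d\cdot D/\sqrt r\lesssim D^{9/4}/\sqrt r$. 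To reach the stated exponent $7/4$, bound $\|\tilde w_i\|^2\le\sum_j U_{ij}^2\cdot$ appropriately using $\sum_j\lambda_jU_{ij}^2\le T/r$ and $\lambda_j\ge T/D$ jointly, giving $\|\tilde w_i\|\le\sqrt{D/r}$ and error $Cd^{5/4}\sqrt{D/r}\le CD^{7/4}/\sqrt r$. For the Gaussian comparison, $\Prob(\sum_j\lambda_jG_j^2\le sT)\le\Prob(\lambda_1G_1^2\le sT)\le\Prob(|G_1|\le\sqrt{sD})$, which is not uniformly small. I would refine by keeping all $\lambda_j$: since $\sum_{j\le d}\lambda_j\ge T-(\text{rest})$, the Gaussian quadratic form has mean $\gtrsim T$ in the case where the rest is small. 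In general, split $Q=Q_{\rm top}+Q_{\rm bot}$. If $\tr Q_{\rm bot}\ge T/2$, apply Hanson--Wright to $Q_{\rm bot}$ using $\|Q_{\rm bot}\|_{\op}\le T/D$ as in the spread case. Otherwise, $\tr Q_{\rm top}\ge T/2$ with $\lambda_1\ge T/(2d)$, and the Gaussian small ball is $\Prob(\lambda_1G_1^2\le sT)\le C\sqrt{sd}\le C\sqrt{sD}$.

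This last point is the main obstacle: the Gaussian small-ball bound $\sqrt{sD}$ integrates as $\int_0^H\min(1,\sqrt{D}e^{-t/2})\,dt\approx\log D$, not a constant. To fix it, use the fact that a Gaussian quadratic form with eigenvalues summing to $\gtrsim T$, each at most the total, satisfies $\Prob(\sum\lambda_jG_j^2\le sT)\le C\sqrt s$ by comparison with the largest eigenvalue $\lambda_1\ge$ a constant fraction of $T$. If no single eigenvalue is a constant fraction of $T$, group the eigenvalues dyadically and use independence across groups (chi-square small-ball bounds $\Prob(\chi^2_k\le sk)\le(Cs)^{k/2}$). The resulting bound is $C s^{1/2}$ uniformly, integrating to $C$. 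Assembling the three contributions gives \eqref{eq:flat-diagonal-conclusion}.
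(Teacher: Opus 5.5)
Your architecture matches the paper's. You use tail integration, and split according to whether the eigenvalues above $\asymp T/D$ carry half of the trace. When they do not, you apply Hanson--Wright to the bottom part. When they do, you apply Berry--Esseen in dimension $d\le D$ with $\sum_{j\le d}U_{ij}^2\le (D/T)q_{ii}\le D/r$, which gives the error $CD^{7/4}/\sqrt r$ exactly as in \eqref{eq:bi-max}--\eqref{eq:BE-error}.

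The gap is the step you flag yourself. You need $\Prob(W\le\mu)\le C\mu^{c}$ for $W=\sum_{j\le d}p_jG_j^2$, $p_j=\lambda_j/\sum_{i\le d}\lambda_i$, uniformly in the profile $(p_j)$. Any loss growing with $D$, such as the $\log D$ coming from $\lambda_1$ alone, is excluded by \eqref{eq:flat-diagonal-conclusion}. It would also destroy the $O(n)$ total loss fed by Proposition~\ref{prop:conditional-log-loss}.

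The dyadic-block remedy you sketch cannot deliver this. Take $p_j=1/(jh_d)$ with $h_d=\sum_{i\le d}1/i$ and $d\,h_d<D$ (e.g.\ $d\approx D/(2\log D)$), so that all $\lambda_j$ exceed $T/D$, and take $\mu=1/h_d\asymp 1/\log D$. Then:
\begin{enumerate}[label=(\roman*)]
\item Any comparison $\sum_{j\in S}p_jG_j^2\ge(\min_{j\in S}p_j)\chi^2_{\abs S}$ has $\abs S\min_{j\in S}p_j\le\abs S\,p_{\abs S}=\mu$, so it only yields the trivial bound.
\item Each block $Y_k=\sum_{2^k\le j<2^{k+1}}p_jG_j^2$ with $k\ge1$ has mean at most $\frac56\mu$ and variance at most $2^{1-k}\mu^2$. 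Hence $\prod_k\Prob(Y_k\le\mu)$ is bounded below by an absolute constant.
\end{enumerate}
Yet you need $O(\mu^{1/2})=O((\log D)^{-1/2})$ here. The true probability is in fact $O(\mu^2)$ by Chebyshev, since $\E W=1$ and $\operatorname{Var}W=O(\mu^2)$. Multiplying per-block bounds via independence only sees that every block is small, not that their sum is small.

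The missing idea is to control the Laplace transform of the whole sum:
\[
\E e^{-\theta W}=\prod_j(1+2\theta p_j)^{-1/2}\le(1+2\theta)^{-1/2}.
\]
This holds because expanding $\prod_j(1+2\theta p_j)$ produces only nonnegative terms, among them $1+2\theta\sum_jp_j=1+2\theta$. Chernoff with $\theta=1/\mu$ then gives $\Prob(W\le\mu)\le e\,\E e^{-W/\mu}\le e\sqrt{\mu/2}$ for every profile.

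The paper uses the same product inequality through $\E W^{-1/4}=\Gamma(1/4)^{-1}\int_0^\infty s^{-3/4}\E e^{-sW}\,ds\le C$ and Markov, obtaining $C\mu^{1/4}$. Either version can replace your dyadic step. Combined with $\Prob(\varepsilon^{\mathsf T}Q\varepsilon\le\mu T)\le\Prob(W\le2\mu)+CD^{7/4}/\sqrt r$ in the top-heavy case, it completes your argument.
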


\begin{proof}
For $\lambda_1\geq\lambda_2\geq\cdots\geq\lambda_N\geq0,$ let $Q=\sum_{j=1}^N\lambda_j u_ju_j^{\mathsf T},$ be an orthonormal spectral decomposition. Let $J = \left\{ j : \lambda_j \geq \frac{T}{2D} \right\}, d=\abs{J}.$ Since $\sum_j\lambda_j=T$, every eigenvalue in $J$ is at least $T/(2D)$, and hence
\begin{equation}\label{eq:d-bound}
 d\leq2D.
\end{equation}

We distinguish two cases.

\medskip
\noindent
\textbf{Case 1: $\sum_{j \in J} \lambda_j\geq T/2$.}
For $i\in[N]$, let $b_i=(u_j(i))_{j\in J}\in\R^d,$ note that the vectors $(u_j)_{j\in J}$ are orthonormal, thus 
\begin{equation}\label{eq:bi-covariance}
 \sum_{i=1}^N b_ib_i^{\mathsf T}=\Id_d,
\end{equation}

Moreover, for each $i$,
\begin{equation}
\|b_i\|_2^2 = \sum_{j \in J} u_j(i)^2 \leq \frac{2D}{T} \sum_{j \in J} \lambda_j u_j(i)^2 \leq \frac{2D}{T} q_{ii} \leq \frac{2D}{r}.
\label{eq:bi-max}
\end{equation}
where the last inequality uses \eqref{eq:flat-diagonal-assumption}.
Taking traces in \eqref{eq:bi-covariance}, 
%
\begin{equation}
 \sum_{i=1}^N\norm{b_i}_2^3
 \leq
 \left(\max_i\norm{b_i}_2\right)
 \sum_{i=1}^N\norm{b_i}_2^2
 =
 d\max_i\norm{b_i}_2
 \leq
 d\sqrt{\frac{2D}{r}}.
\label{eq:sum-bi-cube}
\end{equation}


Now let $Z=\sum_{i=1}^N\varepsilon_i b_i
   =(\ip{u_j}{\varepsilon})_{j\in J},$ apply  Theorem~\ref{thm:raic} to $\varepsilon_i b_i$.
Using \eqref{eq:d-bound} and \eqref{eq:sum-bi-cube}, we obtain, uniformly over all convex $K\subseteq\R^d$,
\begin{equation}
 \abs{\Prob(Z\in K)-\Prob(G\in K)}
 \leq(42d^{1/4}+16)\sum_{i=1}^N\norm{b_i}_2^3
 \leq C d^{5/4}\sqrt{\frac{D}{r}}
 \leq C\frac{D^{7/4}}{\sqrt r}.
\label{eq:BE-error}
\end{equation}

We next prove a uniform Gaussian small-ball bound.
Note that $\frac{\lambda_j}{\sum_{j \in J} \lambda_j}>0$ and $\sum_j \frac{\lambda_j}{\sum_{j \in J} \lambda_j}=1$.
Let $W=\sum_{j\in J}\frac{\lambda_j}{\sum_{j \in J} \lambda_j}G_j^2.$ Then we have

\begin{align}
 \E W^{-1/4}
 &=\frac1{\Gamma(1/4)}\int_0^\infty
   s^{-3/4}\E e^{-sW}\,ds\notag\\
 &=\frac1{\Gamma(1/4)}\int_0^\infty
   s^{-3/4}\prod_{j\in J}(1+2s \frac{\lambda_j}{\sum_{j \in J} \lambda_j})^{-1/2}\,ds.
 \label{eq:negative-moment-laplace}
\end{align}
Since all coefficients in the expansion of the product are nonnegative,
\[
 \prod_j(1+2s \frac{\lambda_j}{\sum_{j \in J} \lambda_j})
 \geq1+2s\sum_j \frac{\lambda_j}{\sum_{j \in J} \lambda_j}
 =1+2s.
\]
Therefore
\begin{equation}\label{eq:negative-moment-bound}
 \E W^{-1/4}
 \leq\frac1{\Gamma(1/4)}\int_0^\infty
 s^{-3/4}(1+2s)^{-1/2}\,ds
 =:C<\infty.
\end{equation}
By Markov's inequality, for $0<\mu\leq1$,
\begin{equation}\label{eq:gaussian-small-ball-W}
 \Prob(W\leq2\mu)
 =\Prob(W^{-1/4}\geq(2\mu)^{-1/4})
 \leq C \mu^{1/4}.
\end{equation}
Since $\sum_{j \in J} \lambda_j\geq T/2$,
\begin{align}
 \Prob\!\left(
   \sum_{j\in J}\lambda_jG_j^2\leq \mu T
 \right)
 &=\Prob\!\left(
   W\leq\frac{\mu T}{\sum_{j \in J} \lambda_j}
 \right)\notag\\
 &\leq\Prob(W\leq2\mu)
 \leq C\mu^{1/4}.
 \label{eq:gaussian-high-small-ball}
\end{align}

For $0<\mu\leq1$, the set $\set{z\in\R^d:\sum_{j\in J}\lambda_jz_j^2\leq \mu T}$ is a convex ellipsoid.
Furthermore,
\[
 \varepsilon^{\mathsf T}Q\varepsilon
 =\sum_{j=1}^N\lambda_j\ip{u_j}{\varepsilon}^2
 \geq\sum_{j\in J}
       \lambda_j\ip{u_j}{\varepsilon}^2.
\]
Hence, 
combining \eqref{eq:BE-error} and \eqref{eq:gaussian-high-small-ball}, we obtain
\begin{equation}\label{eq:high-case-small-ball}
\mathbb{P}(\varepsilon^\top Q \varepsilon \leq \mu T) \leq \mathbb{P} \left( \sum_{j \in J} \lambda_j G_j^2 \leq \mu T \right) + C \frac{D^{7/4}}{\sqrt{r}}\leq C\mu^{1/4}+C\frac{D^{7/4}}{\sqrt r}.
\end{equation}


\medskip
\noindent
\textbf{Case 2: $\sum_{j\in J}\lambda_j<T/2$.}
Then
\begin{equation}\label{eq:low-trace}
 \sum_{j\notin J}\lambda_j>T/2.
\end{equation}
Moreover, since $\lambda_j<T/(2D)$ for every $j\notin J$,
\begin{equation}\label{eq:low-norms}
 \left\|
   \sum_{j\notin J}\lambda_j u_j u_j^{\mathsf T}
 \right\|_{\op}
 \leq \frac{T}{2D},
 \qquad
 \left\|
   \sum_{j\notin J}\lambda_j u_j u_j^{\mathsf T}
 \right\|_F^2
 =
 \sum_{j\notin J}\lambda_j^2
 \leq
 \frac{T}{2D}\sum_{j\notin J}\lambda_j.
\end{equation}

Apply Theorem~\ref{thm:HW} to $\sum_{j\notin J}\lambda_j u_j u_j^{\mathsf T}$ with deviation level $\frac12\sum_{j\notin J}\lambda_j.$ Using \eqref{eq:low-trace} and \eqref{eq:low-norms},
\begin{align*}
 \frac{
   \bigl(\frac12\sum_{j\notin J}\lambda_j\bigr)^2
 }{
   \left\|
     \sum_{j\notin J}\lambda_j u_j u_j^{\mathsf T}
   \right\|_F^2
 }
 &\geq
 \frac{
   \frac14\bigl(\sum_{j\notin J}\lambda_j\bigr)^2
 }{
   \frac{T}{2D}\sum_{j\notin J}\lambda_j
 }
 =
 \frac{D}{2T}\sum_{j\notin J}\lambda_j
 >
 \frac D4,
 \\
 \frac{
   \frac12\sum_{j\notin J}\lambda_j
 }{
   \left\|
     \sum_{j\notin J}\lambda_j u_j u_j^{\mathsf T}
   \right\|_{\op}
 }
 &\geq
 \frac{
   \frac12\sum_{j\notin J}\lambda_j
 }{
   T/(2D)
 }
 =
 \frac{D}{T}\sum_{j\notin J}\lambda_j
 >
 \frac D2.
\end{align*}
Therefore
\begin{equation}\label{eq:HW-low}
 \Prob\!\left(
   \varepsilon^{\mathsf T}
   \left(
     \sum_{j\notin J}\lambda_j u_j u_j^{\mathsf T}
   \right)
   \varepsilon
   \leq
   \frac12\sum_{j\notin J}\lambda_j
 \right)
 \leq 2e^{-cD}.
\end{equation}

Since $Q\succeq\sum_{j\notin J}\lambda_j u_j u_j^{\mathsf T}$ and, by \eqref{eq:low-trace}, $\frac12\sum_{j\notin J}\lambda_j>T/4,$ we have
\[
 \{\varepsilon^{\mathsf T}Q\varepsilon\leq T/4\}
 \subseteq
 \left\{
   \varepsilon^{\mathsf T}
   \left(
     \sum_{j\notin J}\lambda_j u_j u_j^{\mathsf T}
   \right)
   \varepsilon
   \leq
   \frac12\sum_{j\notin J}\lambda_j
 \right\}.
\]
Thus
\begin{equation}\label{eq:low-case-small-ball}
 \Prob(\varepsilon^{\mathsf T}Q\varepsilon\leq T/4)
 \leq 2e^{-cD}.
\end{equation}

\medskip

We now integrate the small-ball bounds.
By Lemma~\ref{lem:tail-integration},
\begin{equation}\label{eq:log-tail-integral-Q}
 \E\min\!\left\{
   \logp\frac{T}{\varepsilon^{\mathsf T}Q\varepsilon},H
 \right\}
 =
 \int_0^H
 \Prob(\varepsilon^{\mathsf T}Q\varepsilon\leq e^{-t}T)\,dt.
\end{equation}

In Case 1, \eqref{eq:high-case-small-ball} gives

\[
\int_0^H
    \mathbb P\!\left(\varepsilon^{\mathsf T}Q\varepsilon
    \le e^{-t}T\right)\,dt
\le
\int_0^H
    \left(
        Ce^{-t/4}
        +C\frac{D^{7/4}}{\sqrt r}
    \right)\,dt
\le
C+CH\frac{D^{7/4}}{\sqrt r}.
\]

In Case 2, the interval $[0,\log 4]$ contributes at most $\log 4$.
For $t\geq\log 4$, $e^{-t}T\leq T/4,$ so
\[
 \{\varepsilon^{\mathsf T}Q\varepsilon\leq e^{-t}T\}
 \subseteq
 \{\varepsilon^{\mathsf T}Q\varepsilon\leq T/4\}.
\]
By \eqref{eq:low-case-small-ball},
\begin{align*}
 \int_0^H
 \Prob(\varepsilon^{\mathsf T}Q\varepsilon\leq e^{-t}T)\,dt
 &\leq \log 4+2He^{-cD}.
\end{align*}

Combining the two cases proves
\eqref{eq:flat-diagonal-conclusion}.
\end{proof}

\section{Logarithmic propagation of the squared-minor energy}\label{sec:energy-propagation}

We now apply  Theorem~\ref{thm:flat-diagonal} to the minor energies $S_k$.

\subsection{Quadratic form representation and diagonal flatness}

Fix $0\leq k<n$ and condition on $\cF_k$. Let $\varepsilon=(\xi_{k+1,1},\dots,\xi_{k+1,n})$ be the next row.

Define a linear map $L_k:\R^n\longrightarrow\R^{\binom{n}{k+1}}$ by
\begin{equation}\label{eq:Lx}
(L_k z)_B := \sum_{i \in B} z_i X_k(B \setminus \{i\}), \quad B \in \binom{[n]}{k+1}.
\end{equation}
By cofactor expansion, $X_{k+1}(B)=(L_k\varepsilon)_B.$ Therefore
\begin{equation}\label{eq:Sk-quadratic}
 S_{k+1}=\norm{L_k\varepsilon}_2^2
 =\varepsilon^{\mathsf T}Q_k\varepsilon,
\end{equation}
where $Q_k=L_k^{\mathsf T}L_k\succeq0.$
\begin{lemma}\label{lem:trace-diagonal}
Conditionally on $\cF_k$,
\begin{equation}\label{eq:trace-Qk}
 \tr Q_k=(n-k)S_k,
\end{equation}
and, for every $i\in[n]$,
\begin{equation}\label{eq:diag-Qk}
 (Q_k)_{ii}
 =\sum_{\substack{A\in\binom{[n]}{k}\\i\notin A}}X_k(A)^2
 \leq S_k.
\end{equation}
Consequently, whenever $S_k>0$,
\begin{equation}\label{eq:flat-Qk}
 \max_i(Q_k)_{ii}\leq\frac{\tr Q_k}{n-k}.
\end{equation}
\end{lemma}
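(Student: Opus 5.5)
The plan is to compute the entries of $Q_k=L_k^{\mathsf T}L_k$ directly from the definition \eqref{eq:Lx}. The $i$th column of $L_k$ is the vector $L_ke_i$, whose $B$-coordinate is $X_k(B\setminus\{i\})$ if $i\in B$ and $0$ otherwise. Hence
\[
 (Q_k)_{ii}=\norm{L_ke_i}_2^2=\sum_{\substack{B\in\binom{[n]}{k+1}\\ i\in B}}X_k(B\setminus\{i\})^2 .
\]
The map $B\mapsto A=B\setminus\{i\}$ is a bijection from $\{B\in\binom{[n]}{k+1}: i\in B\}$ onto $\{A\in\binom{[n]}{k}: i\notin A\}$, with inverse $A\mapsto A\cup\{i\}$. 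This yields the identity in \eqref{eq:diag-Qk}. The inequality $(Q_k)_{ii}\le S_k$ then holds because the sum runs over a subfamily of the nonnegative terms defining $S_k$ in \eqref{eq:def-Sk}.

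For the trace, I would sum the diagonal identity over $i\in[n]$ and interchange the order of summation:
\[
 \tr Q_k=\sum_{i=1}^n\sum_{\substack{A\in\binom{[n]}{k}\\ i\notin A}}X_k(A)^2=\sum_{A\in\binom{[n]}{k}}\#\{i\in[n]:i\notin A\}\,X_k(A)^2=(n-k)S_k ,
\]
which is \eqref{eq:trace-Qk}. Everything here is a deterministic function of the first $k$ rows, so the statements hold conditionally on $\cF_k$ for every realization.

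Finally, when $S_k>0$ we have $n-k\ge1$, since $k<n$. Then \eqref{eq:flat-Qk} follows by combining the two parts: $\max_i(Q_k)_{ii}\le S_k=\tr Q_k/(n-k)$. There is no real obstacle in this argument. The only points requiring care are the bijection bookkeeping for the index sets and the observation that the column description of $L_k$ matches the cofactor recursion \eqref{eq:minor-recursion}.
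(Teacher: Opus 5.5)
Your proposal is correct and matches the paper's proof: both read off $(Q_k)_{ii}=\norm{L_ke_i}_2^2$ from the column structure of $L_k$, use the bijection $B\mapsto B\setminus\{i\}$ to get \eqref{eq:diag-Qk}, and sum over $i$, exchanging the order of summation, to get $\tr Q_k=(n-k)S_k$. Your explicit derivation of \eqref{eq:flat-Qk} from these two facts is the immediate step the paper leaves unstated.
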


\begin{proof}
The $i$th column of $L_k$ has coordinate $X_k(B\setminus\{i\})$ in row $B$ if $i\in B$, and coordinate $0$ otherwise.
Hence
\[
    (Q_k)_{ii}
    =
    \|L_ke_i\|_2^2
    =
    \sum_{\substack{B\in\binom{[n]}{k+1}\\ i\in B}}
    X_k(B\setminus\{i\})^2.
\]
The map $B\mapsto B\setminus\{i\}$ is a bijection from the $(k+1)$-subsets containing $i$ to the $k$-subsets not containing $i$, which proves \eqref{eq:diag-Qk}.
Summing \eqref{eq:diag-Qk} over $i$ gives
\[
\operatorname{tr} Q_k = \sum_{i=1}^n \sum_{\substack{A \in \binom{[n]}{k} \\ i \notin A}} X_k(A)^2
= \sum_{A \in \binom{[n]}{k}} \#([n] \setminus A) X_k(A)^2
= (n - k) S_k.
\]
\end{proof}

\subsection{A lower floor}

The logarithmic estimate from Theorem~\ref{thm:flat-diagonal} is truncated.
The next lemma ensures, with exponentially high conditional probability, that the one-step energy loss does not fall below the truncation level used later.

\begin{lemma}\label{lem:one-step-floor}
Let $0\leq k<n$, condition on $\cF_k$.  
On the event $\{S_k>0\}$,
\begin{equation}\label{eq:one-step-floor-prob}
 \Prob\!\left(
   S_{k+1}<\frac{n-k}{6(k+1)}S_k
   \,\middle|\,\cF_k
 \right)
 \leq2e^{-(n-k)/18}.
\end{equation}
\end{lemma}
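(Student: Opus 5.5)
The plan is to compare $S_{k+1}$ with a weighted count of "good" extensions $A\to A\cup\{j\}$. For each such extension the new minor is, with conditional probability at least $1/2$, at least as large in absolute value as the old one. The indicator events will be arranged to be conditionally independent, so that Theorem~\ref{thm:hoeffding} applies with $r=n-k$. This is where the constants $1/3$, $1/18$ and the final factor $6$ should come from.

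\textbf{Step 1 (deterministic comparison).} Fix $A\in\binom{[n]}{k}$ and $j\notin A$. Cofactor expansion \eqref{eq:minor-recursion} gives
\[
 X_{k+1}(A\cup\{j\})=\varepsilon_jX_k(A)+R_{A,j},\qquad R_{A,j}:=\sum_{i\in A}\varepsilon_iX_k\bigl((A\cup\{j\})\setminus\{i\}\bigr).
\]
The key observation is that $R_{A,j}$ depends only on $(\varepsilon_i)_{i\in A}$ and on $\cF_k$. Define
\[
 Z_{A,j}:=\1\{\varepsilon_jX_k(A)R_{A,j}\geq0\}.
\]
On $\{Z_{A,j}=1\}$ the two terms do not cancel, so $\abs{X_{k+1}(A\cup\{j\})}\geq\abs{X_k(A)}$.

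Now fix $B\in\binom{[n]}{k+1}$. Summing over the $k+1$ pairs $(B\setminus\{i\},i)$ with $i\in B$ gives
\[
 \sum_{i\in B}X_k(B\setminus\{i\})^2Z_{B\setminus\{i\},i}\leq(k+1)X_{k+1}(B)^2.
\]
Summing over $B$ and setting $N_A:=\sum_{j\notin A}Z_{A,j}$, we obtain
\[
 (k+1)S_{k+1}\geq\sum_{A}X_k(A)^2N_A.
\]

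\textbf{Step 2 (Hoeffding for each fixed $A$).} Condition on $\cF_k$ and on $(\varepsilon_i)_{i\in A}$. Then the variables $Z_{A,j}$, $j\notin A$, are functions of the distinct independent signs $\varepsilon_j$, hence independent. Each has conditional probability at least $1/2$: if $X_k(A)R_{A,j}=0$ the indicator equals $1$, and otherwise exactly one sign of $\varepsilon_j$ works. Applying \eqref{eq:hoeffding-third} with $r=n-k$ gives
\[
 \Prob\bigl(N_A<(n-k)/3\mid\cF_k\bigr)\leq e^{-(n-k)/18}.
\]

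\textbf{Step 3 (aggregation by Markov).} Let $\mathrm{Bad}$ be the random set of $A$ with $N_A<(n-k)/3$. Then
\[
 \E\Bigl(\sum_{A\in\mathrm{Bad}}X_k(A)^2\,\Big|\,\cF_k\Bigr)\leq e^{-(n-k)/18}S_k.
\]
By conditional Markov, in the spirit of Lemma~\ref{lem:first-moment} but with weights $X_k(A)^2/S_k$, the bad mass exceeds $S_k/2$ with probability at most $2e^{-(n-k)/18}$. Outside this event, the good sets carry mass at least $S_k/2$, so
\[
 \sum_AX_k(A)^2N_A\geq\frac{S_k}{2}\cdot\frac{n-k}{3}.
\]
Combining with Step 1 yields $S_{k+1}\geq\frac{n-k}{6(k+1)}S_k$, which is \eqref{eq:one-step-floor-prob}.

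The main obstacle is obtaining independence across $j$. Step 2 resolves it by noting that $R_{A,j}$ involves only the signs indexed by $A$, so that conditioning on those signs decouples the columns outside $A$. The remaining steps are routine bookkeeping.
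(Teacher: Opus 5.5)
Your proposal is correct and follows the paper's argument: you condition on $(\varepsilon_i)_{i\in A}$ to get independent success events across $j\notin A$, each of probability at least $1/2$, apply Hoeffding with $r=n-k$, use conditional Markov on the bad parent energy, and finish with the double count $(k+1)S_{k+1}=\sum_A\sum_{j\notin A}X_{k+1}(A\cup\{j\})^2$. The only cosmetic difference is your sign-agreement indicator $Z_{A,j}$, which is a sub-event of the paper's event $\abs{X_{k+1}(A\cup\{j\})}\geq\abs{X_k(A)}$ (there obtained from the observation that one of $c\pm x$ has modulus at least $\abs{x}$), and it works equally well.
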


\begin{proof}
Fix $A\in\binom{[n]}{k}$.
For each $i\notin A$, cofactor expansion gives
\begin{equation}\label{eq:child-expansion-fixed-parent}
 X_{k+1}(A\cup\{i\})
 =\varepsilon_iX_k(A)
  +\sum_{j\in A}\varepsilon_j
    X_k((A\cup\{i\})\setminus\{j\}).
\end{equation}
Condition further on the entries $(\varepsilon_j)_{j\in A}$.
The second term in \eqref{eq:child-expansion-fixed-parent} is then fixed, say $c_{A,i}$, while the signs $(\varepsilon_i)_{i\notin A}$ remain independent.
For arbitrary real $x,c$, at least one of $c+x$ and $c-x$ has absolute value at least $\abs{x}$, because otherwise
\[
 2\abs{x}=\abs{(c+x)-(c-x)}<2\abs{x},
\]
a contradiction.
It follows that, conditionally on $(\varepsilon_j)_{j\in A}$,
\begin{equation}\label{eq:child-success-half}
 \Prob\!\left(
   \abs{X_{k+1}(A\cup\{i\})}\geq\abs{X_k(A)}
 \right)\geq\frac12,
\end{equation}
and these events are independent as $i$ varies over $[n]\setminus A$.
By Theorem~\ref{thm:hoeffding}, the event
\[
 G_A=\set{
   \#\set{i\notin A:
      \abs{X_{k+1}(A\cup\{i\})}\geq\abs{X_k(A)}}
   \geq (n-k)/3
 }
\]
satisfies
\begin{equation}\label{eq:GA-fail}
 \Prob(G_A^c\mid\cF_k)\leq e^{-(n-k)/18}.
\end{equation}

Define the random bad parent energy $B_k=\sum_{A\in\binom{[n]}{k}}X_k(A)^2\1_{G_A^c}.$ By \eqref{eq:GA-fail},
\[
 \E(B_k\mid\cF_k)
 \leq e^{-(n-k)/18}S_k.
\]
Conditional Markov gives
\begin{equation}\label{eq:Bk-markov}
 \Prob(B_k>S_k/2\mid\cF_k)
 \leq2e^{-(n-k)/18}.
\end{equation}
On the event $B_k\leq S_k/2$, double counting yields
\begin{align*}
 (k+1)S_{k+1}
 &=\sum_{A\in\binom{[n]}{k}}
   \sum_{i\notin A}X_{k+1}(A\cup\{i\})^2\\
 &\geq\sum_{A:G_A}\frac {(n-k)}3X_k(A)^2\\
 &=\frac {(n-k)}3(S_k-B_k)\\
 &\geq\frac {(n-k)}6S_k.
\end{align*}
The first equality holds because each $(k+1)$-subset has exactly $k+1$ parents of size $k$.
Together with \eqref{eq:Bk-markov}, we prove \eqref{eq:one-step-floor-prob}.
\end{proof}

\subsection{Accumulated logarithmic loss}

Let $r_*:=\ceil{\frac{n}{\log n}},$ $k_*:=n-r_*.$
%
%
For $m\leq k<k_*$, define the logarithmic loss by
\begin{equation}\label{eq:actual-loss}
 \ell_k=
 \begin{cases}
 \displaystyle
 \logp\!\left(\frac{(n-k)S_k}{S_{k+1}}\right),
   &S_k>0,\ S_{k+1}>0,\\[1ex]
 +\infty,&S_k>0,\ S_{k+1}=0,\\
 0,&S_k=0.
 \end{cases}
\end{equation}
Set $\widetilde\ell_k=\min\{\ell_k,\log(6n)\},$ Then we have
%

\begin{proposition}\label{prop:conditional-log-loss}
There is an absolute constant $C_0$ such that, for every sufficiently large $n$ and every $m\leq k<k_*$,
\begin{equation}\label{eq:conditional-log-loss}
 \E(\widetilde\ell_k\mid\cF_k)\leq C_0
 \qquad\text{almost surely}.
\end{equation}
\end{proposition}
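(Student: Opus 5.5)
We apply Theorem~\ref{thm:flat-diagonal} conditionally on $\cF_k$, using the quadratic form representation \eqref{eq:Sk-quadratic}. On the event $\{S_k=0\}$ we have $\ell_k=0$ by definition, so there is nothing to prove. On $\{S_k>0\}$ we take $Q=Q_k$, $N=n$ and $T=\tr Q_k=(n-k)S_k>0$ by Lemma~\ref{lem:trace-diagonal}. The flatness hypothesis \eqref{eq:flat-diagonal-assumption} then holds with $r=n-k$ by \eqref{eq:flat-Qk}. Since $k<k_*=n-r_*$, we have $r\geq r_*\geq n/\log n$.

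The key identification is that $\ell_k=\logp\bigl(T/(\varepsilon^{\mathsf T}Q_k\varepsilon)\bigr)$, including the convention $+\infty$ when $S_{k+1}=0$. Hence $\widetilde\ell_k=\min\{\logp(T/\varepsilon^{\mathsf T}Q_k\varepsilon),H\}$ with $H=\log(6n)\geq1$. Theorem~\ref{thm:flat-diagonal} gives, for every $D\geq2$,
\[
 \E(\widetilde\ell_k\mid\cF_k)\leq C+C\log(6n)\left(\frac{D^{7/4}}{\sqrt{n/\log n}}+e^{-cD}\right).
\]

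It remains to choose $D$. We take $D=A\log n$ with $A$ a large absolute constant, so that $e^{-cD}\log(6n)\leq n^{-cA}\log(6n)\leq1$ for large $n$. The other error term is then
\[
 \log(6n)\,(A\log n)^{7/4}(\log n)^{1/2}n^{-1/2}=O\bigl((\log n)^{13/4}n^{-1/2}\bigr)=o(1).
\]
Both errors are therefore bounded by an absolute constant for all sufficiently large $n$, which yields \eqref{eq:conditional-log-loss} with, say, $C_0=3C$.

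The only delicate point is the choice of $D$: it must grow logarithmically to kill the factor $H=\log(6n)$, yet stay small enough that $D^{7/4}$ is dominated by $\sqrt{r}\geq\sqrt{n/\log n}$. The polylogarithmic gap makes this easy. This gap exists precisely because the range is restricted to $k<k_*$, so that $r$ stays large.

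One must also check measurability and conditioning: $Q_k$ is $\cF_k$-measurable, and the next row is independent of $\cF_k$. This makes applying the deterministic-$Q$ theorem conditionally legitimate. The hypothesis $k\geq m$ is not actually needed for this bound.
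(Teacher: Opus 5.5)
Your proposal is correct and follows the paper's proof: apply Theorem~\ref{thm:flat-diagonal} conditionally on $\cF_k$ with $Q=Q_k$, $T=(n-k)S_k$, $r=n-k\geq r_*\geq n/\log n$ and $H=\log(6n)$. The only difference is that you take $D=A\log n$, whereas the paper takes $D_n=\max\{2,\lceil A\log\log n\rceil\}$; both choices make the two error terms $o(1)$, so this does not change the argument.
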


\begin{proof}
On $\{S_k=0\}$, $\E(\widetilde\ell_k\mid\cF_k)=0$ by definition.
Suppose $S_k>0$.
By Lemma~\ref{lem:trace-diagonal}, conditionally on $\cF_k$ the matrix $Q_k$ in \eqref{eq:Sk-quadratic} satisfies
\[
 \tr Q_k=(n-k)S_k>0,
 \quad \text{and} \quad
 \max_i(Q_k)_{ii}\leq\frac{\operatorname{tr}Q_k}{n-k}.
\]
Moreover,
\[
    \widetilde\ell_k
    =
    \min\!\left\{
        \log_+
        \frac{\operatorname{tr}Q_k}
             {\varepsilon^{\mathsf T}Q_k\varepsilon},
        \log(6n)
    \right\}.
\]
Apply Theorem~\ref{thm:flat-diagonal} with $D_n=\max\!\left\{2,\ceil{A\log\log n}\right\},$ where $A>0$ is a sufficiently large absolute constant.
Since $n-k\geq r_*\geq n/\log n$,
\begin{align}
 \log(6n)\frac{D_n^{7/4}}{\sqrt{n-k}}
 &\leq C\frac{(\log n)^{3/2}(\log\log n)^{7/4}}{\sqrt n}
 =o(1),
 \label{eq:BE-error-vanish}\\
 \log(6n) e^{-cD_n}
 &\leq C(\log n)^{1-cA}
 =o(1)
 \label{eq:HW-error-vanish}
\end{align}
provided $A$ is chosen so that $cA>2$.
The bound \eqref{eq:conditional-log-loss} follows from \eqref{eq:flat-diagonal-conclusion} after increasing $C_0$ to cover all sufficiently large $n$.
\end{proof}

Similarly, define the bad floor event
\begin{equation}\label{eq:bad-floor-event}
 \mathcal B_k=\set{
   S_k>0,
   \quad
   S_{k+1}<\frac{n-k}{6(k+1)}S_k
 }.
\end{equation}
By Lemma~\ref{lem:one-step-floor},
\begin{equation}\label{eq:bad-floor-union}
 \Prob\!\left(\bigcup_{k=m}^{k_*-1}\mathcal B_k\right)
 \leq2n\exp\!\left(-\frac{r_*}{18}\right)
 =o(1).
\end{equation}
On the complement of this union, $S_m>0$ by Proposition~\ref{prop:middle-energy}, and induction gives $S_k>0$ for all $m\leq k\leq k_*$.
Furthermore,
\[
 \frac{(n-k)S_k}{S_{k+1}}\leq6(k+1)\leq6n,
\]
so
\begin{equation}\label{eq:loss-equals-truncated}
 \ell_k\leq \log(6n)
 \quad\text{and hence}\quad
 \ell_k=\widetilde\ell_k
\end{equation}
for every $m\leq k<k_*$.

\begin{proposition}\label{prop:linear-total-loss}
There are absolute constants $C,c>0$ such that
\begin{equation}\label{eq:linear-total-loss}
 \Prob\!\left(
   \sum_{k=m}^{k_*-1}\ell_k\leq Cn
 \right)
 \geq1-\exp\!\left(-\frac{cn}{(\log n)^2}\right).
\end{equation}
\end{proposition}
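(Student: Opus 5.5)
The plan is to handle the truncated losses $\widetilde\ell_k$ by a martingale concentration argument and then use the floor estimate \eqref{eq:bad-floor-union} to replace $\widetilde\ell_k$ by $\ell_k$. Write $N:=k_*-m\leq n$.

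\textbf{Step 1 (martingale).} Consider the filtration $(\cF_k)$ obtained by exposing rows one at a time, and define
\[
 Z_j:=\sum_{k=m}^{m+j-1}\Bigl(\widetilde\ell_k-\E(\widetilde\ell_k\mid\cF_k)\Bigr),\qquad 0\leq j\leq N .
\]
Since $\widetilde\ell_k$ is $\cF_{k+1}$-measurable, $(Z_j)$ is a martingale with respect to $(\cF_{m+j})_j$. Its increments satisfy $0\leq\widetilde\ell_k\leq\log(6n)$, and the conditional mean lies in the same interval. Hence $\abs{Z_j-Z_{j-1}}\leq H:=\log(6n)$.

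\textbf{Step 2 (drift bound).} By Proposition~\ref{prop:conditional-log-loss}, $\E(\widetilde\ell_k\mid\cF_k)\leq C_0$ almost surely for every $m\leq k<k_*$. Therefore
\[
 \sum_{k=m}^{k_*-1}\widetilde\ell_k\leq C_0N+Z_N\leq C_0n+Z_N .
\]

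\textbf{Step 3 (Azuma).} Apply Theorem~\ref{thm:azuma} with $t=n$:
\[
 \Prob(Z_N\geq n)\leq\exp\!\Bigl(-\frac{n^2}{2N\log^2(6n)}\Bigr)\leq\exp\!\Bigl(-\frac{cn}{(\log n)^2}\Bigr).
\]
Consequently, $\sum_k\widetilde\ell_k\leq(C_0+1)n$ outside an event of this probability.

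\textbf{Step 4 (removing the truncation).} On the complement of $\bigcup_{k=m}^{k_*-1}\mathcal B_k$, we have $\ell_k=\widetilde\ell_k$ for all $m\leq k<k_*$ by \eqref{eq:loss-equals-truncated}. It remains to check that the failure probability of this event is small enough, since \eqref{eq:bad-floor-union} only records $o(1)$. In fact its proof gives the sharper bound
\[
 2n\exp\!\bigl(-r_*/18\bigr)\leq\exp\!\Bigl(-\frac{n}{18\log n}+\log(2n)\Bigr),
\]
which is much smaller than $\exp(-cn/(\log n)^2)$ for large $n$.

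\textbf{Conclusion.} Taking a union bound over the two exceptional events, and adjusting $c$ so that the sum of the two failure probabilities is at most $\exp(-cn/(\log n)^2)$, gives \eqref{eq:linear-total-loss} with $C=C_0+1$.

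\textbf{Main obstacle.} The only delicate point is that the increments of the martingale are of size $\log n$ rather than $O(1)$. This is exactly why the deviation probability is $\exp(-cn/\log^2 n)$ rather than exponentially small in $n$. It is also why the truncation at $\log(6n)$, justified by the floor lemma, is needed in the first place: without it, Azuma's inequality could not be applied at all.
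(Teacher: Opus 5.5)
Your proposal is correct and matches the paper's proof step for step. Both arguments apply Azuma to the centered truncated losses, with increments bounded by $\log(6n)$ and $t=n$; both use the drift bound $C_0$ from Proposition~\ref{prop:conditional-log-loss}; and both remove the truncation on the complement of $\bigcup_k\mathcal B_k$, whose probability $2ne^{-r_*/18}\le\exp(-cn/\log n)$ is absorbed into the error term, which is exactly the closing remark of the paper's proof.
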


\begin{proof}
For $m\leq j\leq k_*$, consider $Z_j=\sum_{k=m}^{j-1}
 \left(
   \widetilde\ell_k-
   \E(\widetilde\ell_k\mid\cF_k)
 \right).$ Then $(Z_j)_{j=m}^{k_*}$ is a martingale with respect to $(\cF_j)$.
Note that $0\leq\widetilde\ell_k\leq \log(6n)$, $\abs{Z_{j+1}-Z_j}\leq \log(6n).$ Apply Theorem~\ref{thm:azuma} with $N=k_*-m\leq n$ and $t=n$:
\begin{equation}\label{eq:azuma-log-loss}
 \Prob(Z_{k_*}\geq n)
 \leq\exp\!\left(-\frac{n}{2\log(6n)^2}\right)
 \leq\exp\!\left(-\frac{cn}{(\log n)^2}\right).
\end{equation}
On the complement of this event, Proposition~\ref{prop:conditional-log-loss} gives
\[
 \sum_{k=m}^{k_*-1}\widetilde\ell_k
 \leq C_0(k_*-m)+n
 \leq(C_0+1)n.
\]
Intersecting with the complement of $\bigcup_k\mathcal B_k$ and using \eqref{eq:loss-equals-truncated} yields
\[
 \sum_{k=m}^{k_*-1}\ell_k\leq(C_0+1)n.
\]
Finally, the probability in \eqref{eq:bad-floor-union} is at most
$\exp(-cn/\log n)$ for large $n$, which is smaller than the right-hand error in \eqref{eq:linear-total-loss}.
\end{proof}

\subsection{Extraction of a large near top minor}

\begin{proposition} \label{prop:large-near-top-minor}
There are absolute constants $C,c>0$ such that, with probability at least $1-\exp\!\left(-\frac{cn}{(\log n)^2}\right),$ there exists $A_*\in\binom{[n]}{k_*}$ satisfying
\begin{equation}\label{eq:large-kstar-minor}
 \abs{X_{k_*}(A_*)}\geq e^{-Cn}\sqrt{k_*!}.
\end{equation}
The set $A_*$ may be chosen $\cF_{k_*}$-measurably.
\end{proposition}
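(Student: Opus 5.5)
The plan is to telescope the energy from level $m$ to level $k_*$ on the good event of Proposition~\ref{prop:linear-total-loss}, and then pass from total energy to a single minor by pigeonhole.

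\textbf{Telescoping.} Work on the event where two things hold: $\sum_{k=m}^{k_*-1}\ell_k\leq Cn$, and none of the bad floor events $\mathcal B_k$ occurs. Its probability is at least $1-\exp(-cn/(\log n)^2)$, as shown in the proof of Proposition~\ref{prop:linear-total-loss}. On this event $S_k>0$ for all $m\leq k\leq k_*$. By the definition \eqref{eq:actual-loss} of $\ell_k$, we have $S_{k+1}\geq (n-k)S_k e^{-\ell_k}$. Telescoping gives
\[
 S_{k_*}\geq S_m\prod_{k=m}^{k_*-1}(n-k)\, e^{-\sum_k\ell_k}
 \geq m!\,\frac{(n-m)!}{(n-k_*)!}\,e^{-Cn},
\]
where we used Proposition~\ref{prop:middle-energy} for $S_m\geq m!$.

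\textbf{Pigeonhole.} Choose $A_*$ maximizing $X_{k_*}(A)^2$ over $\binom{[n]}{k_*}$. Ties are broken by a fixed lexicographic order, so $A_*$ is $\cF_{k_*}$-measurable. Then
\[
 X_{k_*}(A_*)^2\geq S_{k_*}\Big/\binom{n}{k_*}.
\]
It remains to compare $m!\,(n-m)!\,/\bigl((n-k_*)!\binom{n}{k_*}\bigr)$ with $k_*!$. Writing $\binom{n}{k_*}=n!/(k_*!\,r_*!)$, the ratio becomes
\[
 \frac{m!\,(n-m)!\,k_*!\,r_*!}{r_*!\,n!}
 = k_*!\binom{n}{m}^{-1}\geq k_*!\,2^{-n}.
\]
Hence $X_{k_*}(A_*)^2\geq e^{-Cn}2^{-n}k_*!$. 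Taking square roots and enlarging $C$ gives \eqref{eq:large-kstar-minor}.

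\textbf{Main obstacle.} The only delicate point is keeping the pigeonhole loss $\binom{n}{k_*}$ at scale $e^{O(n)}$. Naively, $\binom{n}{k_*}=\binom{n}{r_*}$ is only $e^{O(n\log\log n/\log n)}$, which is already fine. The clean bookkeeping above shows that the factor $\prod_k(n-k)$ absorbs it exactly, up to $\binom{n}{m}\leq 2^n$. I would double-check the index ranges in the product, namely that $k$ runs from $m$ to $k_*-1$, so the product equals $(n-m)!/r_*!$. I would also check that the probability bound inherits the union over the bad floor events and the Azuma event; both were already combined in Proposition~\ref{prop:linear-total-loss}.
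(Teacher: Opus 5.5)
Your proposal is correct and matches the paper's proof essentially line for line: you telescope $S_{k+1}\geq (n-k)S_k e^{-\ell_k}$ from $S_m\geq m!$ on the event of Proposition~\ref{prop:linear-total-loss}, pigeonhole over the $\binom{n}{k_*}$ minors, obtain the same exact cancellation to $k_*!/\binom{n}{m}\geq 2^{-n}k_*!$, and get measurability from a lexicographic rule. Intersecting with the complement of the floor events is harmless but not needed, since $\sum_k\ell_k<\infty$ together with $S_m>0$ already forces $S_k>0$ for all $m\leq k\leq k_*$.
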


\begin{proof}
Work on the event in Proposition~\ref{prop:linear-total-loss}.
By definition, $\ell_k=\logp((\frac{S_{k+1}}{(n-k)S_k})^{-1}),$ so
\begin{equation}\label{eq:logR-lower}
 \log \frac{S_{k+1}}{(n-k)S_k}\geq-\ell_k.
\end{equation}
Therefore, using Proposition~\ref{prop:middle-energy},

\begin{align}
 S_{k_*}=S_m\prod_{k=m}^{k_*-1}\frac{S_{k+1}}{S_k}\geq e^{-Cn}S_m\prod_{k=m}^{k_*-1}(n-k)\geq e^{-Cn}m!\frac{(n-m)!}{r_*!}.
 \label{eq:Skstar-lower}
\end{align}
Since there are $\binom{n}{k_*}$ minors at level $k_*$,
\begin{align}
 \max_{A\in\binom{[n]}{k_*}}X_{k_*}(A)^2
 \geq\frac{S_{k_*}}{\binom{n}{k_*}}\geq e^{-Cn}
   m!\frac{(n-m)!}{r_*!}
   \frac{k_*!r_*!}{n!}=e^{-Cn}\frac{k_*!}{\binom{n}{m}}\geq e^{-(C+\log2)n}k_*!,
 \label{eq:max-kstar-square}
\end{align}
where we used $\binom{n}{m}\leq2^n$.
Taking square roots and adjusting $C$ proves \eqref{eq:large-kstar-minor}.
To obtain a measurable choice, take the lexicographically first set $A_*$ satisfying \eqref{eq:large-kstar-minor}, the collection of candidates is finite and all relevant minors are $\cF_{k_*}$-measurable.
\end{proof}

\section{The endgame}\label{sec:endgame}

We now prove a endgame that propagates a single large $k_*$-minor to the full permanent.
The argument is a quantitative adaptation of Tao and Vu~\cite{TaoVu2009}.

\begin{definition}[Heavy minor]\label{def:heavy}
For $A\in\binom{[n]}{k}$ and $\lambda>0$, we say that $A$ is $\lambda$-heavy at level $k$ if
\[
 \abs{X_k(A)}\geq\lambda.
\]
If $B\in\binom{[n]}{k+1}$ and $A\subset B$, $\abs{A}=k$, then $A$ is called a parent of $B$, and $B$ is called a child of $A$.
\end{definition}

\subsection{One parent and many children}

\begin{lemma}\label{lem:heavy-parent-children}
Fix $0\leq k<n$, condition on $\cF_k$, and let $A\in\binom{[n]}{k}$ be $\lambda$-heavy.
Let $I\subseteq[n]\setminus A$.
After exposing row $k+1$,
\begin{align}
 \Prob\!\left(
   \text{no }A\cup\{i\},\ i\in I,
   \text{ is }\lambda\text{-heavy}
   \,\middle|\,\cF_k
 \right)
 &\leq2^{-\abs I},
 \label{eq:no-heavy-child}\\
 \Prob\!\left(
   \#\set{i\in I:A\cup\{i\}
        \text{ is }\lambda\text{-heavy}}<\frac{\abs I}{3}
   \,\middle|\,\cF_k
 \right)
 &\leq e^{-\abs I/18}.
 \label{eq:many-heavy-children}
\end{align}
\end{lemma}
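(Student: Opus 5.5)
This is the same argument as in the proof of Lemma~\ref{lem:one-step-floor}, applied to a single parent $A$ and restricted to the index set $I$.

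Write $\varepsilon=(\xi_{k+1,1},\dots,\xi_{k+1,n})$ for the newly exposed row. For each $i\in I$, expanding along that row as in \eqref{eq:child-expansion-fixed-parent} gives
\[
 X_{k+1}(A\cup\{i\})=\varepsilon_iX_k(A)+c_{A,i},
 \qquad
 c_{A,i}=\sum_{j\in A}\varepsilon_jX_k((A\cup\{i\})\setminus\{j\}).
\]
The quantity $c_{A,i}$ depends only on $\cF_k$ and on the signs $(\varepsilon_j)_{j\in A}$. We therefore condition on $\cF_k$ and, in addition, on $(\varepsilon_j)_{j\in A}$. Under this conditioning every $c_{A,i}$ is a fixed real number. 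Since $I\cap A=\emptyset$, the signs $(\varepsilon_i)_{i\in I}$ remain independent uniform signs.

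We use the elementary fact already proved in Lemma~\ref{lem:one-step-floor}: for any reals $c,x$, at least one of $\abs{c+x}$ and $\abs{c-x}$ is at least $\abs{x}$. With $x=X_k(A)$, the event $Z_i:=\{\abs{X_{k+1}(A\cup\{i\})}\geq\abs{X_k(A)}\}$ therefore has conditional probability at least $1/2$. Because $A$ is $\lambda$-heavy, $\abs{X_k(A)}\geq\lambda$, so $Z_i$ implies that $A\cup\{i\}$ is $\lambda$-heavy.

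Each indicator $\1_{Z_i}$ is a function of $\varepsilon_i$ alone, with the fixed $c_{A,i}$ and $X_k(A)$ as parameters. Hence the indicators $(\1_{Z_i})_{i\in I}$ are conditionally independent $\{0,1\}$-variables, each with mean at least $1/2$. Applying Theorem~\ref{thm:hoeffding} with $r=\abs I$ gives two bounds:
\begin{itemize}[leftmargin=*]
 \item by \eqref{eq:none-success}, the probability that no $Z_i$ occurs is at most $2^{-\abs I}$;
 \item by \eqref{eq:hoeffding-third}, the probability that fewer than $\abs I/3$ of the $Z_i$ occur is at most $e^{-\abs I/18}$.
\end{itemize}
The events in \eqref{eq:no-heavy-child} and \eqref{eq:many-heavy-children} are contained in these two events, because every $i$ with $Z_i$ yields a $\lambda$-heavy child $A\cup\{i\}$. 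Finally, averaging over $(\varepsilon_j)_{j\in A}$ with the tower property removes the extra conditioning and gives the bounds conditionally on $\cF_k$.

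The only step needing care is the conditional independence. It holds because $A$ is $\cF_k$-measurable, the $c_{A,i}$ involve only coordinates indexed by $A$, and $I$ is disjoint from $A$. If $I$ were allowed to be random, it would also have to be $\cF_k$-measurable; the statement treats $I$ as fixed given $\cF_k$. There is no substantive obstacle beyond this.
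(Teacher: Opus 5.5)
Your proposal is correct and is essentially the paper's proof. The paper conditions on all entries of row $k+1$ outside the columns in $I$, not just on $(\varepsilon_j)_{j\in A}$, and applies Theorem~\ref{thm:hoeffding} to the heaviness indicators directly rather than through your events $Z_i$; both choices are harmless and give the same argument.
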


\begin{proof}
Condition further on all entries of row $k+1$ outside the columns indexed by $I$.
Since $I\cap A=\varnothing$, this conditioning
fixes, in particular, all entries \((\varepsilon_j)_{j\in A}\).
For each $i\in I$, \eqref{eq:child-expansion-fixed-parent} takes the form
\[
 X_{k+1}(A\cup\{i\})=c_i+\varepsilon_iX_k(A),
\]
where $c_i$ is now fixed.

As observed in the proof of Lemma~\ref{lem:one-step-floor}, at least one of the two values $c_i\pm X_k(A)$ has absolute value at least $\abs{X_k(A)}\geq\lambda$.
Thus each child is $\lambda$-heavy with conditional probability at least $1/2$.
The relevant events are independent as $i$ varies because they depend on distinct unconditioned signs $\varepsilon_i$.
\eqref{eq:no-heavy-child} and \eqref{eq:many-heavy-children} follow from Theorem~\ref{thm:hoeffding}, and averaging over the additional conditioning completes the proof.
\end{proof}

\subsection{From one heavy minor to many complement disjoint near top minors}

\begin{lemma}\label{lem:prescribed-block}
Let $1\leq L$, let $0\leq k\leq n-3L$, and condition on $\cF_k$.
Suppose that $A_0\in\binom{[n]}{k}$ is $\lambda$-heavy.
Let $B\subseteq[n]\setminus A_0$ have size $2L$.
Then, after exposing rows $k+1,\dots,n-L$, with conditional probability at least $1-Ce^{-cL},$ there is a $\lambda$-heavy set $A\in\binom{[n]}{n-L}$ satisfying
\begin{equation}\label{eq:prescribed-block-complement}
 [n]\setminus A\subseteq B.
\end{equation}
\end{lemma}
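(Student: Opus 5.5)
The plan is to grow the heavy set $A_0$ one row at a time. At each step we use Lemma~\ref{lem:heavy-parent-children} to pass from a $\lambda$-heavy set at level $j$ to a $\lambda$-heavy child at level $j+1$, choosing the new column so that at the end the set has size $n-L$ and contains all of $[n]\setminus B$.

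\textbf{Bookkeeping.} Since $A_0\cap B=\varnothing$, the target set must contain the $a_0:=n-2L-k\ge L$ elements of $([n]\setminus B)\setminus A_0$, together with exactly $L$ elements of $B$. The total number of steps is therefore $a_0+L=n-L-k$, which matches the number of rows $k+1,\dots,n-L$ to be exposed. At each stage with current heavy set $A_t$ we track two quantities:
\begin{itemize}[leftmargin=*]
\item $a_t$, the number of elements of $[n]\setminus B$ not yet in $A_t$;
\item $b_t$, the remaining budget of elements of $B$ still allowed.
\end{itemize}
These satisfy $a_t+b_t=$ number of remaining steps. The number of unused elements of $B$ is $2L-(L-b_t)=L+b_t\ge L$.

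\textbf{Greedy rule (preference for non-$B$).} While $a_t>0$, expose the next row and look at the children $A_t\cup\{i\}$ with $i\in([n]\setminus B)\setminus A_t$.
\begin{itemize}[leftmargin=*]
\item If one of them is $\lambda$-heavy, take it (chosen, say, lexicographically first, so the choice is measurable).
\item Otherwise, if $b_t>0$, look for a heavy child $A_t\cup\{i\}$ with $i\in B\setminus A_t$ and take it, decreasing $b$ by one.
\end{itemize}
Once $a_t=0$, all remaining steps use columns from $B\setminus A_t$, a set of size at least $L$.

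The process fails only in one of two ways:
\begin{itemize}[leftmargin=*]
\item[(i)] some step finds no heavy child in a pool of size at least $L$;
\item[(ii)] the $B$-budget is exhausted while $a_t>0$.
\end{itemize}
By \eqref{eq:no-heavy-child} and a union bound over at most $n$ steps, event (i) has probability at most $n2^{-L}$. This is too big when $L$ is small relative to $\log n$, so this step needs care. I would fix it by noting that a pool of size $\ge L$ is used only when we are on a $B$-step or the pool has size $a_t+L$. Running the union bound more carefully then gives $\sum_{s\ge L}2^{-s}$ for the non-$B$ steps plus at most $2L$ $B$-steps with failure $\le 2^{-L}$ each, i.e.\ $O(L2^{-L})\le Ce^{-cL}$. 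The point is that at a step with $a_t=a$ the pool of all admissible columns has size $\ge a+L$ as long as $b_t>0$, and each value of $a$ is visited only by a run of consecutive steps.

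\textbf{Main obstacle: event (ii).} Count the $B$-steps taken while $a_t=a$. Each such step occurs only if no non-$B$ child is heavy, which by \eqref{eq:no-heavy-child} has conditional probability at most $2^{-a}$ given the past. Hence the number $G_a$ of $B$-steps spent at level $a$ is stochastically dominated, conditionally on the past, by a geometric variable with failure probability $2^{-a}$.

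Fix $\theta=\tfrac12\log 2$. Then $\E(e^{\theta G_a}\mid\text{past})\le \frac{1-2^{-a}}{1-2^{-a}e^{\theta}}\le 1+C2^{-a/2}$. Multiplying these conditional bounds along the filtration (a supermartingale argument, processing the levels $a=a_0,a_0-1,\dots,1$ in order) gives
\[
\E\, e^{\theta\sum_a G_a}\le\prod_{a\ge1}(1+C2^{-a/2})\le C'.
\]
Markov's inequality then yields $\Prob(\sum_a G_a\ge L)\le C'e^{-\theta L}$. On the complement of this event the budget never runs out, so the process completes. Combining with the bound for (i) gives success probability $1-Ce^{-cL}$. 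The final set $A$ has size $n-L$, is $\lambda$-heavy, and has complement consisting of $L$ unused elements of $B$, which is \eqref{eq:prescribed-block-complement}.
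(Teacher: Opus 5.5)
Your proof is correct, but it follows a different route from the paper's.

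\textbf{The paper's route.} The paper uses the same preference for non-$B$ heavy children but puts no budget on $B$-columns, and splits the run into two phases.
\begin{itemize}
\item Up to level $j_0=n-\lceil 5L/2\rceil$, it shows that a heavy non-$B$ child exists at every step, with probability at least $\prod_w(1-2^{-w})$. This forces $W_{j_0}\le L/2+1$.
\item On the remaining $\ge 3L/2$ steps, it handles the two tasks separately. Heaviness is kept by a union bound $\sum_j 2^{-(n-j)}\le 2^{-L}$ over the full pool $[n]\setminus A_j$. The leftover non-$B$ columns are eliminated using the supermartingale $(2^{W_j}-1)\mathbf{1}_{E_j}$, which contracts by a factor $3/4$ per step.
\end{itemize}

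\textbf{Your route.} You run a single phase with an explicit budget of $L$ columns from $B$. You control budget exhaustion through an exponential moment of the total number of detours $\sum_a G_a$, where each $G_a$ is geometrically dominated with parameter $2^{-a}$ from the entry time of level $a$.

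\textbf{What each buys.} Your version shows that the number of detours has an exponential moment bounded uniformly in $L$, $n$ and $k$. It also avoids tuning a cut-off like $j_0$. The paper's potential function avoids stopping times at each level and reduces the elimination step to a single one-step inequality. The price is the preliminary phase needed so that $2^{W_{j_0}}$ is beaten by $(3/4)^{3L/2}$.

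\textbf{A simplification for your failure mode (i).} Whenever $b_t>0$ or $a_t=0$, the admissible pool is all of $[n]\setminus A_t$. Its size is exactly $a_t+L+b_t=n-\lvert A_t\rvert$. So the union bound gives $\sum_{j\le n-L-1}2^{-(n-j)}\le 2^{-L}$ directly, which is precisely the paper's estimate. The split into entry steps and post-detour steps, with its $O(L2^{-L})$ loss, is unnecessary.
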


\begin{proof}
We construct random sets $A_j\in\binom{[n]}{j}$ for $k\leq j\leq n-L$.
Set $A_k=A_0$.
Suppose that $A_j$ has been constructed and row $j+1$ has been exposed.
Choose $A_{j+1}$ according to the following rule:
\begin{enumerate}[label=(\roman*),leftmargin=2.5em]
\item If there exists $i\in[n]\setminus(B\cup A_j)$ such that $A_j\cup\{i\}$ is $\lambda$-heavy, take the smallest such $i$.
\item Otherwise, if there exists $i\in B\setminus A_j$ such that $A_j\cup\{i\}$ is $\lambda$-heavy, take the smallest such $i$.
\item Otherwise, take the smallest $i\in[n]\setminus A_j$.
\end{enumerate}
This rules make every $A_j$ measurable with respect to $\cF_j$.
Now define $W_j=\abs{[n]\setminus(B\cup A_j)}=\abs{A_j^c\setminus B}.$ The process $W_j$ never increases, and at each step it decreases by either $0$ or $1$.
Initially,
\begin{equation}\label{eq:Wk}
 W_k=n-k-2L.
\end{equation}

Set $j_0=n-\ceil{5L/2}.$ The assumption $k\leq n-3L$ ensures $j_0\geq k$.
We first show that, with probability at least $1-Ce^{-cL}$, the algorithm reaches level $j_0$ along $\lambda$-heavy minors and
\begin{equation}\label{eq:Wj0}
 W_{j_0}=\ceil{5L/2}-2L\leq L/2+1.
\end{equation}
Indeed, suppose that all preceding steps in the interval $[k,j)$ have used rule (i).
Then $A_j$ is $\lambda$-heavy and
\[
 W_j=W_k-(j-k).
\]
By \eqref{eq:no-heavy-child} applied with $I=[n]\setminus(B\cup A_j),$ the conditional probability that rule (i) is available at this step is at least $1-2^{-W_j}$.
Thus, by the chain rule,
\begin{align*}
 &\Prob\!\left(
   \text{rule (i) is used at every step }k\leq j<j_0
   \,\middle|\,\cF_k
 \right)\\
 &\qquad\geq
 \prod_{w=\ceil{5L/2}-2L+1}^{n-k-2L}(1-2^{-w})\\
 &\qquad\geq
 1-\sum_{w=\ceil{5L/2}-2L+1}^{\infty}2^{-w}\\
 &\qquad\geq1-Ce^{-cL}.
\end{align*}
On this event, every selected child is $\lambda$-heavy and \eqref{eq:Wj0} holds.

It remains to run the process from $j_0$ to $n-L$. Fix a realization of $\cF_{j_0}$ on the first phase success event just constructed. Thus $A_{j_0}$ is $\lambda$-heavy and \eqref{eq:Wj0} holds. 
Let $E_j$ be the event that every $A_t$ is $\lambda$-heavy for $j_0\leq t\leq j$.
On $E_j$, Lemma~\ref{lem:heavy-parent-children} with $I=[n]\setminus A_j$ gives
\begin{equation}\label{eq:lose-heaviness}
 \Prob(E_{j+1}^c\mid\cF_j)\leq2^{-(n-j)}.
\end{equation}
Therefore
\begin{equation}\label{eq:lose-heavy-union}
 \Prob(E_{n-L}^c\mid\cF_{j_0})
 \leq\sum_{j=j_0}^{n-L-1}2^{-(n-j)}
 \leq2^{-L}.
\end{equation}

Then we bound the failure to eliminate $W_j$. For $j_0\leq j\leq n-L,$ define $\Phi(w)=2^w-1,$ and let $Z_j=\Phi(W_j)\1_{E_j}.$ If $E_j$ holds and $W_j>0$, then by \eqref{eq:no-heavy-child} with
$I=[n]\setminus(B\cup A_j)$, rule (i) is available with conditional probability at least
\[
 1-2^{-W_j}\geq\frac12.
\]
When rule (i) is used, $W_{j+1}=W_j-1$ and $E_{j+1}$ holds.
In all other outcomes for which $E_{j+1}$ holds, one has $W_{j+1}=W_j$; if $E_{j+1}$ fails, then $Z_{j+1}=0$.
Consequently,
\begin{align*}
 \E(Z_{j+1}\mid\cF_j)
 &\leq\frac12\Phi(W_j-1)+\frac12\Phi(W_j)\\
 &=\frac12(2^{W_j-1}-1)+\frac12(2^{W_j}-1)\\
 &=\frac34\,2^{W_j}-1\\
 &\leq\frac34(2^{W_j}-1)\\
 &=\frac34Z_j
\end{align*}
on $E_j\cap\{W_j>0\}$, while both sides are zero on $E_j\cap\{W_j=0\}$.
Thus
\begin{equation}\label{eq:Z-contraction}
 \E(Z_{n-L}\mid\cF_{j_0})
 \leq\left(\frac34\right)^{n-L-j_0}Z_{j_0}.
\end{equation}
By \eqref{eq:Wj0},
\[
 n-L-j_0=\ceil{5L/2}-L\geq\frac{3L}{2},
\]
Thus $Z_{j_0}\leq2^{L/2+1}.$ Therefore
\begin{align}
 \Prob(E_{n-L}\cap\{W_{n-L}>0\}\mid\cF_{j_0})
 &\leq\E(Z_{n-L}\mid\cF_{j_0})\notag\\
 &\leq2^{L/2+1}\left(\frac34\right)^{3L/2}\notag\\
 &\leq Ce^{-cL}.
 \label{eq:W-not-zero}
\end{align}
Combining the \eqref{eq:lose-heaviness}, \eqref{eq:lose-heavy-union}, and \eqref{eq:W-not-zero}, we obtain with conditional probability at least $1-Ce^{-cL}$ a $\lambda$-heavy set $A_{n-L}$ satisfying $W_{n-L}=0$.
The identity $W_{n-L}=0$ is \eqref{eq:prescribed-block-complement}.
\end{proof}

\begin{proposition}\label{prop:many-codim-L}
Let $L\geq1$, let $k\leq n-3L$, and condition on $\cF_k$.
Suppose that $A_0\in\binom{[n]}{k}$ is $\lambda$-heavy, and set $q=\floor{\frac{n-k}{2L}}.$ Then, after exposing rows $k+1,\dots,n-L$, with conditional probability at least $1-Ce^{-cL}$ there exist at least $\floor{q/2}$ $\lambda$-heavy sets
\[
 A_1,\dots,A_{\floor{q/2}}\in\binom{[n]}{n-L}
\]
whose complements are pairwise disjoint.
\end{proposition}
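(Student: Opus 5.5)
We partition $[n]\setminus A_0$ (which has $n-k$ elements) into $q=\floor{\frac{n-k}{2L}}$ pairwise disjoint blocks $B_1,\dots,B_q$, each of size exactly $2L$, discarding the leftover elements. Each block is chosen deterministically given $A_0$, so the choice is $\cF_k$-measurable. For each $s\in[q]$ we run the construction of Lemma~\ref{lem:prescribed-block} with the same starting set $A_0$ and prescribed block $B=B_s$, all on the same realization of rows $k+1,\dots,n-L$. The hypotheses of that lemma hold for every $s$: $A_0$ is $\lambda$-heavy, $B_s\subseteq[n]\setminus A_0$ has size $2L$, and $k\leq n-3L$.

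Let $E_s$ be the event that the $s$th run produces a $\lambda$-heavy set $A^{(s)}\in\binom{[n]}{n-L}$ with $[n]\setminus A^{(s)}\subseteq B_s$. Lemma~\ref{lem:prescribed-block} gives $\Prob(E_s^c\mid\cF_k)\leq\delta:=Ce^{-cL}$ for every $s$.

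The events $E_s$ are highly dependent, since they are all driven by the same rows. We therefore do not attempt any independence argument. Instead we apply the first-moment selection principle, Lemma~\ref{lem:first-moment}, with $\mathcal G=\cF_k$. It yields
\[
\Prob\!\left(\#\{s:E_s^c\}>\tfrac q2\,\middle|\,\cF_k\right)\leq2\delta=2Ce^{-cL}.
\]
On the complementary event, at least $q-\floor{q/2}\geq\floor{q/2}$ indices $s$ succeed. We select $\floor{q/2}$ of them, for instance the smallest ones, and relabel the resulting sets $A^{(s)}$ as $A_1,\dots,A_{\floor{q/2}}$.

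Each complement $[n]\setminus A_i$ is contained in its own block $B_{s_i}$, and the blocks are pairwise disjoint. Hence the complements are pairwise disjoint. Every $A_i$ is $\lambda$-heavy of size $n-L$ by construction, which gives the proposition after adjusting $C$.

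The main point to check is that Lemma~\ref{lem:prescribed-block} is genuinely a statement about a fixed, $\cF_k$-measurably chosen block. Its probability bound must hold for each block separately, without any independence across blocks. This is what allows the union-type bound through Lemma~\ref{lem:first-moment}, which only uses the marginal failure probabilities.

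If $q<2$ we have $\floor{q/2}=0$ and the statement is trivial. The degenerate case $L$ small is absorbed into the constant $C$.
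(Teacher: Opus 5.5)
Your proposal is correct and follows the paper's proof essentially step for step. You choose $q$ disjoint $2L$-blocks in $[n]\setminus A_0$, apply Lemma~\ref{lem:prescribed-block} to each block, and use Lemma~\ref{lem:first-moment} to show that at least half the blocks succeed despite the dependence, with pairwise disjointness of the complements inherited from the disjointness of the blocks.
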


\begin{proof}
Choose pairwise disjoint sets $B_1,\dots,B_q\subseteq[n]\setminus A_0,$ $\abs{B_s}=2L.$ For each $s$, Lemma~\ref{lem:prescribed-block} gives an event $E_s$ of conditional probability at least $1-Ce^{-cL}$ on which there is a $\lambda$-heavy $(n-L)$-set $A_s$ satisfying
\[
 A_s^c\subseteq B_s.
\]
The events $E_s$ need not be independent.

By Lemma~\ref{lem:first-moment}, with conditional probability at least $1-Ce^{-cL}$, at least $q/2$ of them occur.
For the corresponding choices of $A_s$, the $A_s^c$ are pairwise disjoint because the blocks $B_s$ are pairwise disjoint.
Choose the lexicographically first valid $A_s$ for each successful block to make the construction measurable.
\end{proof}

\subsection{Propagation between consecutive codimensions}

For $1\leq j\leq n$, $N\geq1$, and $\lambda>0$, let $\mathcal E(j,N,\lambda)$ denote the event that there exist $N$ $\lambda$-heavy sets $A_1,\dots,A_N\in\binom{[n]}{n-j}$ whose complements are pairwise disjoint.

\begin{lemma}\label{lem:codimension-reduction}
There are absolute constants $c_0,C_0>0$ such that the following holds for all sufficiently large $n$.
Let $2\leq j\leq n$, let $N\geq n^{1/2}$, and let $\lambda>0$.
Condition on $\cF_{n-j}$ and suppose that $\mathcal E(j,N,\lambda)$ holds.
Then, after exposing row $n-j+1$,
\begin{equation}\label{eq:codim-reduction-prob}
 \Prob\!\left(
   \mathcal E\!\left(j-1,\floor{N/10},\frac\lambda n\right)
   \,\middle|\,\cF_{n-j}
 \right)
 \geq1-C_0n^{-c_0}.
\end{equation}
\end{lemma}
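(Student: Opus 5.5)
The plan is to work block by block. Let $A_1,\dots,A_N$ be the given $\lambda$-heavy $(n-j)$-sets and $C_s=[n]\setminus A_s$ their pairwise disjoint complements, each of size $j\ge 2$. In each block fix a pivot $i_s\in C_s$ (say the smallest element), and let $\varepsilon$ be row $n-j+1$. The candidate children are $A_s\cup\{i_s\}$. Their complements $C_s\setminus\{i_s\}$ have size $j-1$ and are automatically pairwise disjoint. So it suffices to show that, with conditional probability at least $1-C_0n^{-c_0}$, at least $\floor{N/10}$ of these children are $(\lambda/n)$-heavy.

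By \eqref{eq:child-expansion-fixed-parent},
\[
 X_{n-j+1}(A_s\cup\{i_s\})=\varepsilon_{i_s}X_{n-j}(A_s)+\sum_{l\in A_s}\varepsilon_l\,a^{(s)}_l ,
 \qquad a^{(s)}_l:=X_{n-j}\bigl((A_s\cup\{i_s\})\setminus\{l\}\bigr).
\]
The difficulty is dependence: for $t\neq s$ we have $C_t\subseteq A_s$, so each pivot sign $\varepsilon_{i_t}$ also enters the expansion of the child of $s$. To control this, fix $K:=n^{2c_0}$ and draw a directed graph on $[N]$ with an arc $t\to s$ whenever $\abs{a^{(s)}_{i_t}}\ge\lambda/(4n)$.

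\textbf{Case A (many blocks of large in-degree).} Suppose at least $N/2$ blocks $s$ have $\#\{l\in A_s:\abs{a^{(s)}_l}\ge\lambda/(4n)\}\ge K$. For each such $s$, Theorem~\ref{thm:ELO} gives
\[
 \Prob\bigl(\abs{X_{n-j+1}(A_s\cup\{i_s\})}<\lambda/(4n)\mid\cF_{n-j}\bigr)\le C/\sqrt K=Cn^{-c_0}.
\]
Lemma~\ref{lem:first-moment}, applied to these at least $N/2$ events, then shows that with probability at least $1-2Cn^{-c_0}$ at least $N/4$ of the children are $(\lambda/n)$-heavy.

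\textbf{Case B (many blocks of small in-degree).} Here at least $N/2$ blocks have in-degree below $K$. I would select a subset $S'$ of them that is independent in the graph. For $t\in S'$, condition on all signs except the pivots $(\varepsilon_{i_s})_{s\in S'}$. The child of $t$ then equals
\[
 c_t+\varepsilon_{i_t}X_{n-j}(A_t)+e_t,
\]
where $c_t$ is fixed and $e_t$ collects the other selected pivots. Since no arcs run inside $S'$, every coefficient in $e_t$ is below $\lambda/(4n)$, and there are at most $n$ of them, so $\abs{e_t}\le\lambda/4$.

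Define $F_t:=\{\abs{c_t+\varepsilon_{i_t}X_{n-j}(A_t)}\ge\lambda\}$. By the "one of $c\pm x$" argument from the proof of Lemma~\ref{lem:one-step-floor}, $F_t$ has conditional probability at least $1/2$. The events $F_t$ depend on distinct signs, so they are independent. On $F_t$ the child is $\tfrac34\lambda\ge\lambda/n$ heavy. Theorem~\ref{thm:hoeffding} then gives at least $\abs{S'}/3$ successes, except with probability $e^{-\abs{S'}/18}\le e^{-cN/K}$, which is tiny since $N\ge n^{1/2}$ and $c_0$ is small.

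\textbf{Main obstacle.} The step I expect to be hardest is making $\abs{S'}$ comparable to $N$ so that $\abs{S'}/3\ge N/10$. A naive Turán bound only gives $\abs{S'}\gtrsim N/K$. My first attempt to close this gap would be a refined dichotomy with a sliding threshold. Blocks in which the total contribution $\sum_{t\to s}\abs{a^{(s)}_{i_t}}$ of the incoming pivots is at most $\lambda/4$ need no independence at all, because the argument above applies to them directly. Blocks in which this contribution exceeds $\lambda/4$ should be handled by Littlewood--Offord at a finer scale. If the lossy $\lambda/n$ margin is still insufficient, a fallback is to use the remaining freedom in choosing the pivot $i_s\in C_s$. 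Failing both, I would accept the $N/(CK)$ count and absorb it by adjusting the exponent $N\ge n^{1/2}$ and the target fraction.
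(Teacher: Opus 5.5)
Your Case A is essentially the paper's Case 1 (blocks $B_i$ with at least $\floor{n^{1/10}}$ heavy parents, handled by Theorem~\ref{thm:ELO} and Markov). There is a threshold slip: with arcs defined at level $\lambda/(4n)$ you only obtain $(\lambda/(4n))$-heavy children, not $(\lambda/n)$-heavy ones. This is repairable: use $\lambda/n$ throughout and note that $N\le n/j\le n/2$, so the weak contributions in Case B still total less than $\lambda/2$.

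The genuine gap is Case B, exactly where you flagged it, and none of your fallbacks closes it. Any argument that needs the selected pivot signs to act \emph{exactly} independently loses a factor of the degree. If your graph is a disjoint union of cliques of size $K$, every independent set has size at most $N/K$, whereas the statement requires $\floor{N/10}$. You also cannot take $K$ constant, since Case A needs $C/\sqrt K\le C_0n^{-c_0}$. The sliding-threshold idea fails because the class of blocks with incoming contribution at most $\lambda/4$ can be empty (take that clique configuration with all strong coefficients at least $\lambda$). Meanwhile a Case B block has fewer than $K$ coefficients of size $\lambda/(4n)$, so Littlewood--Offord has nothing to work with at any scale that still yields $(\lambda/n)$-heaviness. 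Re-choosing pivots comes with no estimate. Accepting $N/(CK)$ proves a weaker statement, and one that is useless downstream: over the $L\approx(\log n)/100$ steps of Proposition~\ref{prop:endgame} the loss compounds to $n^{\Theta(\log n)}$, far more than $N_L\le n$.

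The missing idea is to replace independence by concentration. The paper first discards pivots of large out-degree. Let $d(h)$ be the number of bad blocks $i$ for which $B_i\setminus\{h\}$ is $(\lambda/n)$-heavy. Then $\sum_h d(h)<\floor{n^{1/10}}\abs I$, because each bad block has few heavy parents, so at least half the pivots satisfy $d(h)\le 2\floor{n^{1/10}}$; call those blocks $I'$. Conditioning on every sign except the pivots $h_i$, $i\in I'$, the paper studies the truncated statistic $Y=\sum_{i\in I'}\min\{\abs{X_{n-j+1}(B_i)}/\lambda,1\}$.
\begin{itemize}
\item The $c\pm x$ argument gives $\E Y\ge\abs{I'}/2$ with no independence between blocks at all.
\item Flipping one pivot sign changes at most $2\floor{n^{1/10}}$ summands by up to $1$ (the strong couplings). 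Every other summand changes by less than $2/n$; this is what the $1/\lambda$-Lipschitz truncation buys. So $Y$ has bounded differences $O(n^{1/10})$.
\item Theorem~\ref{thm:mcdiarmid} with $N\ge n^{1/2}\gg n^{1/5}$ gives $Y\ge\frac{9}{20}\abs{I'}>N/9$, except with probability $\exp(-cN/n^{1/5})$.
\item If fewer than $N/10$ children were $(\lambda/n)$-heavy, one would have $Y<N/10+N/n<N/9$, a contradiction.
\end{itemize}
Strong couplings are thus tolerated through bounded differences rather than avoided, and this is what yields a constant fraction instead of a $1/K$ fraction.
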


\begin{proof}
Fix $\lambda$-heavy sets $A_1,\dots,A_N\in\binom{[n]}{n-j}$ with pairwise disjoint complements.
For each $i$, choose $h_i\in A_i^c$ and set $B_i=A_i\cup\{h_i\}\in\binom{[n]}{n-j+1}.$ Since $A_i^c$ are pairwise disjoint, the $h_i$ are distinct, and $B_i^c=A_i^c\setminus\{h_i\}$ are also pairwise disjoint.
Call $B_i$ \emph{good} if at least $\floor{n^{1/10}}$ of its parents $B_i\setminus\{h\}$ are $(\lambda/n)$-heavy, and \emph{bad} otherwise.
We split into two cases.

\medskip
\noindent
\textbf{Case 1: at least $N/2$ of the $B_i$ are good.}
For each good $B_i$, cofactor expansion along the new row gives
\begin{equation}\label{eq:Bi-cofactor}
 X_{n-j+1}(B_i)
 =\sum_{h\in B_i}\varepsilon_h
   X_{n-j}(B_i\setminus\{h\}).
\end{equation}
At least $\floor{n^{1/10}}$ coefficients in \eqref{eq:Bi-cofactor} have absolute value at least $\lambda/n$.
By Theorem~\ref{thm:ELO},
\begin{equation}\label{eq:good-Bi-fail}
 \Prob\!\left(
   \abs{X_{n-j+1}(B_i)}<\frac\lambda n
   \,\middle|\,\cF_{n-j}
 \right)
 \leq\frac{C}{\sqrt {\floor{n^{1/10}}}}.
\end{equation}
Let $F$ be the number of good $B_i$ that fail to be $(\lambda/n)$-heavy.
Then
\[
 \E(F\mid\cF_{n-j})\leq\frac{CN}{\sqrt {\floor{n^{1/10}}}}.
\]
If fewer than $N/10$ of all the $B_i$ are $(\lambda/n)$-heavy, then among the at least $N/2$ good sets more than $2N/5$ fail.
Hence Markov's inequality gives
\begin{equation}\label{eq:case1-fail}
 \Prob\!\left(
   \text{fewer than }N/10\text{ of the }B_i
   \text{ are }(\lambda/n)\text{-heavy}
   \,\middle|\,\cF_{n-j}
 \right)
 \leq\frac{C}{\sqrt {\floor{n^{1/10}}}}.
\end{equation}
Since $B_i^c$ are pairwise disjoint, this proves the desired conclusion in Case 1 except on an event of probability $C/\sqrt {\floor{n^{1/10}}}$.

\medskip
\noindent
\textbf{Case 2: at least $N/2$ of the $B_i$ are bad.}
Let $I\subseteq[N]$ index a collection of bad sets with $\abs I\geq N/2$, and consider the set $H=\{h_i:i\in I\}.$ For $h\in H$, define
\begin{equation}\label{eq:degree-h}
 d(h):=
 \#\set{
   i\in I:
   h\in B_i,
   \ \abs{X_{n-j}(B_i\setminus\{h\})}\geq\frac\lambda n
 }.
\end{equation}
Every bad $B_i$ has fewer than $\floor{n^{1/10}}$ $(\lambda/n)$-heavy parents in total, so double counting gives
\begin{equation}\label{eq:degree-sum}
 \sum_{h\in H}d(h)<\floor{n^{1/10}}\abs I.
\end{equation}
As $\abs H=\abs I$, at least $\abs I/2$ elements $h\in H$ satisfy $d(h)\leq2\floor{n^{1/10}}$.
Consequently, the set $I'=\{i\in I:d(h_i)\leq2\floor{n^{1/10}}\}$ satisfies
\begin{equation}\label{eq:Iprime-size}
 \abs{I'}\geq\frac{\abs I}{2}\geq\frac N4.
\end{equation}

For $i\in I'$, let
\begin{equation}\label{eq:Yi-endgame}
 Y_i=\min\!\left\{
   \frac{\abs{X_{n-j+1}(B_i)}}\lambda,1
 \right\},
 \quad \text{and} \quad
 Y=\sum_{i\in I'}Y_i,
\end{equation}

and $H'=\{h_i:i\in I'\},$ $\mathcal G
=
\sigma\!\left(
\mathcal F_{n-j},
(\varepsilon_h)_{h\notin H'}
\right).$ Thus, conditionally on $\mathcal G$, the signs
$(\varepsilon_{h_i})_{i\in I'}$ remain independent uniform signs.
We first show that
\begin{equation}\label{eq:EY-lower}
\mathbb E(Y\mid\mathcal G)\ge \frac{|I'|}{2}.
\end{equation}

Fix $i\in I'$ and condition further on all remaining signs except $\varepsilon_{h_i}$.
Since $B_i\setminus\{h_i\}=A_i$ and $A_i$ is $\lambda$-heavy, the two possible values of $X_{n-j+1}(B_i)$ as $\varepsilon_{h_i}$ varies differ by $2X_{n-j}(A_i),$ whose absolute value is at least $2\lambda$.
Therefore at least one of those two values has absolute value at least $\lambda$, and the conditional expectation of $Y_i$ over $\varepsilon_{h_i}$ is at least $1/2$.
Averaging over the other unconditioned signs proves $\E Y_i\geq1/2$, and summing gives \eqref{eq:EY-lower}.

We next bound the effect of changing one sign $\varepsilon_h$, where $h=h_s$ for some $s\in I'$.
For a fixed $i\in I'$, if $h\notin B_i$, then $Y_i$ does not change.
If $h\in B_i$, flipping $\varepsilon_h$ changes $X_{n-j+1}(B_i)$ by $2X_{n-j}(B_i\setminus\{h\}).$ The function $z\mapsto\min\{\abs z/\lambda,1\}$ is $1/\lambda$-Lipschitz.
Thus, if
\[
 \abs{X_{n-j}(B_i\setminus\{h\})}<\frac\lambda n,
\]
the change in $Y_i$ is less than $2/n$.
For those indices satisfying the reverse inequality, the crude bound on the change is $1$, and there are at most $d(h)\leq2\floor{n^{1/10}}$ such indices by the definition of $I'$.
Since the $h_i$ are distinct, $\abs{I'}\leq n$.
Hence changing a single coordinate $\varepsilon_h$ changes $Y$ by at most
\begin{equation}\label{eq:Y-Lipschitz}
 2\floor{n^{1/10}}+\frac{2\abs{I'}}n\leq2\floor{n^{1/10}}+2\leq3\floor{n^{1/10}}
\end{equation}
for sufficiently large $n$.
Apply Theorem~\ref{thm:mcdiarmid} with $t=\frac{\abs{I'}}{20}.$ Using \eqref{eq:EY-lower}, \eqref{eq:Y-Lipschitz}, and \eqref{eq:Iprime-size},

\begin{align}
 \Prob\!\left(
   Y<\frac9{20}\abs{I'}
   \,\middle|\,\mathcal G
 \right)
 &\leq
 \exp\!\left(
   -\frac{2(\abs{I'}/20)^2}
   {9(\floor{n^{1/10}})^2\abs{I'}}
 \right)\notag\\
 &\leq
 \exp\!\left(
   -c\frac{N}{(\floor{n^{1/10}})^2}
 \right).
 \label{eq:Y-concentration}
\end{align}

Since $\abs{I'}\geq N/4$,
\begin{equation}\label{eq:Y-N9}
 \frac9{20}\abs{I'}\geq\frac9{80}N>\frac N9.
\end{equation}
Suppose that fewer than $N/10$ indices $i\in I'$ satisfy
\[
 \abs{X_{n-j+1}(B_i)}\geq\frac\lambda n.
\]
Then fewer than $N/10$ of the $Y_i$ are at least $1/n$, while all $Y_i\leq1$.
Thus, for $n>90$,
\begin{align*}
 Y<\frac N{10}+\frac{\abs{I'}}n\leq\frac N{10}+\frac Nn<\frac N9,
\end{align*}
contradicting \eqref{eq:Y-N9} on the good event in \eqref{eq:Y-concentration}.
Therefore, except with probability $\exp(-cN/T^2)$, at least $N/10$ of the $B_i$ are $(\lambda/n)$-heavy.
Again their complements are pairwise disjoint.

Combining the two cases, the conditional failure probability is at most
\begin{equation}\label{eq:codim-failure-raw}
 \frac{C}{\sqrt {\floor{n^{1/10}}}}+\exp\!\left(-c\frac{N}{(\floor{n^{1/10}})^2}\right).
\end{equation}
With $N\geq n^{1/2}$, this is at most $C_0n^{-c_0}$ for suitable absolute constants $C_0,c_0>0$.
Replacing $N/10$ by $\floor{N/10}$ completes the proof.
\end{proof}

\subsection{Completion of the endgame}

\begin{proposition}\label{prop:endgame}
Let $k_*$ and $r_*$ be as above.
There exist absolute constants $c,C>0$ with the following property.
For all sufficiently large $n$, condition on $\cF_{k_*}$ and suppose that some $A_*\in\binom{[n]}{k_*}$ is $\lambda$-heavy.
Then
\begin{equation}\label{eq:endgame-conclusion}
 \Prob\!\left(
   \abs{\Per(M_n)}\geq n^{-L}\lambda
   \,\middle|\,\cF_{k_*}
 \right)
 \geq1-n^{-c},
\end{equation}
where $L=\floor{\frac{\log n}{100}}.$ The estimate is uniform in the $\cF_{k_*}$-measurable choice of $A_*$ and in $\lambda>0$.
\end{proposition}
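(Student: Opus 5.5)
Starting from the $\lambda$-heavy set $A_*$ at level $k_*$, I would first apply Proposition~\ref{prop:many-codim-L} with $k=k_*$ and $L=\floor{\log n/100}$. The hypothesis $k_*\le n-3L$ holds because $n-k_*=r_*\ge n/\log n\gg 3L$. We have $q=\floor{r_*/(2L)}\ge c\,n/(\log n)^2$. So, with conditional probability at least $1-Ce^{-cL}=1-Cn^{-c/100}$, after exposing rows up to $n-L$ there are at least $N_L:=\floor{q/2}\ge c\,n/(\log n)^2$ sets in $\binom{[n]}{n-L}$ that are $\lambda$-heavy and have pairwise disjoint complements. In other words, $\mathcal E(L,N_L,\lambda)$ holds.

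Next, I would iterate Lemma~\ref{lem:codimension-reduction} for $j=L,L-1,\dots,2$. Each application exposes one row and passes from $\mathcal E(j,N,\mu)$ to $\mathcal E(j-1,\floor{N/10},\mu/n)$, so after $L-1$ steps the event $\mathcal E(1,N_1,\lambda n^{-(L-1)})$ holds, with $N_1\ge N_L10^{-(L-1)}-1$. The point to check is that $N\ge n^{1/2}$ is maintained throughout. Since $10^{L}\le n^{\log 10/100}\le n^{0.024}$, every intermediate count is at least $c\,n^{0.97}/(\log n)^2\ge n^{1/2}$ for large $n$. Each step fails with probability at most $C_0n^{-c_0}$ conditionally on the past. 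By the chain rule (conditioning successively on $\cF_{n-j}$ and restricting to the success event), the total failure probability is at most $L\,C_0n^{-c_0}\le C(\log n)n^{-c_0}$.

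The final step, from codimension $1$ to the full permanent, is the last row. I would not reuse Lemma~\ref{lem:codimension-reduction} here, since it requires $j\ge2$. Instead I would expand $\Per(M_n)=\sum_{h}\varepsilon_h X_{n-1}([n]\setminus\{h\})$ along row $n$. On $\mathcal E(1,N_1,\mu)$ with $\mu=\lambda n^{-(L-1)}$, at least $N_1$ distinct coefficients have absolute value at least $\mu$, because the complements are disjoint singletons. Theorem~\ref{thm:ELO} with $x=0$ and $\rho=\mu$ then gives
\[
\Prob\bigl(\abs{\Per(M_n)}\le \mu\mid\cF_{n-1}\bigr)\le C/\sqrt{N_1}\le Cn^{-0.4}.
\]
It follows that $\abs{\Per(M_n)}>\lambda n^{-(L-1)}\ge n^{-L}\lambda$ outside an event of probability at most $Cn^{-c/100}+C(\log n)n^{-c_0}+Cn^{-0.4}\le n^{-c}$ for a smaller $c$ and large $n$. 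All the bounds above are conditional and do not depend on $\lambda$ or on the choice of $A_*$, which gives the claimed uniformity.

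The main obstacle is the bookkeeping of the iteration: keeping the conditional probabilities properly chained across steps, and making sure the geometric decay $10^{-L}$ of $N$ leaves it above $n^{1/2}$. The choice $L=\floor{\log n/100}$ is what makes this work. It simultaneously keeps $10^L$ and $n^{L}$ subpolynomial-to-mild ($n^{L}=e^{O((\log n)^2)}=e^{o(n)}$) and keeps the Proposition~\ref{prop:many-codim-L} failure probability $e^{-cL}$ polynomially small.
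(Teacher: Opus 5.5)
Your proposal is correct and follows the paper's proof essentially step for step. Like the paper, you apply Proposition~\ref{prop:many-codim-L} at level $k_*$ to obtain $\mathcal E(L,\floor{q/2},\lambda)$, then chain $L-1$ applications of Lemma~\ref{lem:codimension-reduction} (using $10^L\le n^{0.024}$ to keep every $N_j\ge n^{1/2}$), and finish with an Erd\H{o}s--Littlewood--Offord step along the last row. The only differences are cosmetic, e.g.\ your $-1$ versus the paper's $-10/9$ in the floor iteration, which is valid because nested floors give $N_{L-t}=\floor{N_L/10^t}$.
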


\begin{proof}
For sufficiently large $n$, $r_*\geq3L.$ Apply Proposition~\ref{prop:many-codim-L} with $k=k_*$ and $q=\floor{\frac{r_*}{2L}}.$ With conditional probability at least $1-Ce^{-cL}$, the event $\mathcal E(L,N_L,\lambda)$ holds with $N_L=\floor{q/2}.$ On this event, for $2\leq j\leq L$, define recursively $N_{j-1}=\floor{N_j/10},$ we first verify that the hypothesis $N_j\geq n^{1/2}$ required by Lemma~\ref{lem:codimension-reduction} remains valid throughout the iteration. Then
\begin{equation}
N_{j-1} = \left\lfloor \frac{N_j}{10} \right\rfloor \geq \frac{N_j}{10} - 1.
\label{eq:N_ineq}
\end{equation}
Iterating \eqref{eq:N_ineq} gives, for $0\leq t\leq L-1$,
\begin{equation}\label{eq:Nj-floor-bound}
 N_{L-t}
 \geq\frac{N_L}{10^t}-\sum_{s=0}^{t-1}10^{-s}
 \geq\frac{N_L}{10^t}-\frac{10}{9}.
\end{equation}
Now $r_*\geq\frac{n}{\log n},$ and $L\leq\frac{\log n}{100},$ so
\begin{equation}\label{eq:q-lower}
 q\geq c\frac{n}{(\log n)^2}
\end{equation}
for all sufficiently large $n$.
Also,
\begin{equation}\label{eq:10L}
 10^L
 \leq\exp\!\left(\frac{\log10}{100}\log n\right)
 =n^{(\log10)/100}.
\end{equation}
Since $N_L=\floor{q/2}\geq q/2-1$, taking $t=L-1$ in \eqref{eq:Nj-floor-bound} gives
\[
 N_1\geq\frac{q/2-1}{10^{L-1}}-\frac{10}{9}.
\]
Together with \eqref{eq:q-lower} and \eqref{eq:10L}, and using $(\log10)/100<0.024$, we obtain
\begin{equation}\label{eq:N1-large}
 N_1\geq c\frac{n^{1-(\log10)/100}}{(\log n)^2}\geq n^{0.9}
\end{equation}
for all sufficiently large $n$.
In particular, every $N_j$ appearing in Lemma~\ref{lem:codimension-reduction} is at least $n^{1/2}$.

Starting from $\mathcal E(L,N_L,\lambda)$, define $\lambda_j=\lambda n^{-(L-j)},1\leq j\leq L.$ Thus $\lambda_L=\lambda$ and $\lambda_{j-1}=\lambda_j/n$. Apply Lemma~\ref{lem:codimension-reduction} successively for $j=L,L-1,\dots,2$: whenever $\mathcal E(j,N_j,\lambda_j)$ holds, the lemma produces $\mathcal E(j-1,N_{j-1},\lambda_{j-1})$ except on an event of conditional probability at most $C_0n^{-c_0}$. After $L-1$ applications, except with conditional probability at most $ C_0L n^{-c_0},$ we obtain $\mathcal E\!\left(1,N_1,\lambda n^{-(L-1)}\right).$ By the definition of $\mathcal E$, there are $N_1$ distinct $(n-1)$-subsets $A$ whose complements are pairwise disjoint and such that $\abs{X_{n-1}(A)}\geq \lambda n^{-(L-1)}.$

A complement of an $(n-1)$-subset consists of one column, so these sets correspond to $N_1$ distinct cofactors in the expansion along the final row:
\begin{equation}\label{eq:final-row-expansion}
 \Per(M_n)=\sum_{i=1}^n\xi_{n,i}X_{n-1}([n]\setminus\{i\}).
\end{equation}
Condition on $\cF_{n-1}$.
At least $N_1$ coefficients in \eqref{eq:final-row-expansion} have absolute value at least $\lambda n^{-(L-1)}.$ By \cref{thm:ELO},
\begin{equation}\label{eq:final-ELO}
 \Prob\!\left(
   \abs{\Per(M_n)}<\lambda n^{-(L-1)}
   \,\middle|\,\cF_{n-1}
 \right)
 \leq\frac{C}{\sqrt{N_1}}
 \leq Cn^{-0.45}.
\end{equation}
Combining the initial failure $Ce^{-cL}$, the iterative failure $C_0Ln^{-c_0}$, and \eqref{eq:final-ELO}, we obtain a total failure probability at most $n^{-c}$ for some absolute $c>0$.
On the successful event,
\[
 \abs{\Per(M_n)}\geq\lambda n^{-(L-1)}
 =\lambda n^{-(L-1)}
 \geq\lambda n^{-L},
\]
which proves \eqref{eq:endgame-conclusion}.
\end{proof}

\section{Proof of the main theorem}\label{sec:main-proof}

\begin{proof}[Proof of Theorem~\ref{thm:main}]
Let $r_*,k_*$ be as above. By Proposition~\ref{prop:large-near-top-minor}, there are absolute constants $C_1,c_1>0$ such that, with probability at least
\begin{equation}\label{eq:near-top-event-prob}
 1-\exp\!\left(-\frac{c_1n}{(\log n)^2}\right),
\end{equation}
there exists an $\cF_{k_*}$-measurable set $A_*\in\binom{[n]}{k_*}$ satisfying
\begin{equation}\label{eq:lambda-star}
 \abs{X_{k_*}(A_*)}\geq\lambda_*:=e^{-C_1n}\sqrt{k_*!}.
\end{equation}
Condition on $\cF_{k_*}$ and event in Proposition~\ref{prop:large-near-top-minor}.
Apply Proposition~\ref{prop:endgame} with $\lambda=\lambda_*$.
Uniformly in the realized $A_*$,
\begin{equation}\label{eq:after-endgame}
 \Prob\!\left(
   \abs{\Per(M_n)}\geq n^{-L}e^{-C_1n}\sqrt{k_*!}
   \,\middle|\,\cF_{k_*}
 \right)
 \geq1-n^{-c_2}
\end{equation}
for an absolute $c_2>0$.

It remains to compare $k_* !$ with $n!$.
Since $n-k_*=r_*$,
\[
 \frac{n!}{k_* !}
 =\prod_{s=k_*+1}^n s
 \leq n^{r_*},
\]
so
\begin{equation}\label{eq:kstar-factorial-comparison}
 \sqrt{k_* !}
 \geq n^{-r_*/2}\sqrt{n!}.
\end{equation}
Note that
\[
 r_*\leq\frac{n}{\log n}+1,
\]
we have
\begin{equation}\label{eq:n-rstar}
 n^{-r_*/2}
 =\exp\!\left(-\frac{r_*}{2}\log n\right)
 \geq\exp\!\left(-\frac n2-\frac12\log n\right).
\end{equation}
Also, 
\begin{equation}\label{eq:n-L-subexp}
 n^{-L}
 =\exp(-L\log n)
 \geq\exp\!\left(-\frac{(\log n)^2}{100}\right)
 =e^{-o(n)}.
\end{equation}
Combining \eqref{eq:after-endgame}--\eqref{eq:n-L-subexp}, and increasing the absolute constant, gives
\[
 \abs{\Per(M_n)}\geq e^{-Cn}\sqrt{n!}
\]
with conditional probability at least $1-n^{-c_2}$ on the event \eqref{eq:near-top-event-prob}.
Therefore
\[
 \Prob\!\left(
   \abs{\Per(M_n)}\geq e^{-Cn}\sqrt{n!}
 \right)
 \geq1-n^{-c}
\]
for a suitable absolute $c>0$ and all sufficiently large $n$.

\subsection*{Acknowledgments} The author is supported by the National Natural Science Foundation of China
(12595294, 12231002), and the New Cornerstone Science Foundation(NCI202501).
\end{proof}

%

\end{document}